\theoremstyle{plain}
\newtheorem{thm}{Theorem}[section]
\newtheorem{prop}[thm]{Proposition}
\newtheorem{lemma}[thm]{Lemma}
\theoremstyle{remark}
\theoremstyle{definition}
\newtheorem{rem}[thm]{Remark}
\newtheorem{rems}[thm]{Remarks}
\newtheorem{remdef}[thm]{Remark-Definition}
\newtheorem{remsdefs}[thm]{Remarks-Definitions}
\newtheorem{defi}[thm]{Definition}
\newtheorem{recall}[thm]{Recall}
\newtheorem{notas}[thm]{Notations}
\title[Real square roots of matrices]{Real square roots of matrices:\\ differential properties in semi-simple, symmetric and orthogonal cases}
\author{Alberto Dolcetti \and Donato Pertici}
\begin{document}

\parindent 0pt
\selectlanguage{english}

\maketitle

\vspace*{-0.2in}

\begin{center}
{\scriptsize Dipartimento di Matematica  e Informatica, Viale Morgagni 67/a, 50134 Firenze, ITALIA

\vspace*{0.07in}

alberto.dolcetti@unifi.it, \  \url{http://orcid.org/0000-0001-9791-8122}

\vspace*{-0.03in}

donato.pertici@unifi.it,  \   \url{http://orcid.org/0000-0003-4667-9568}}

\end{center}

%\maketitle

%\vspace*{-0.1in}

\begin{abstract}
We study the differential and metric structures of the set of real square roots of a non-singular real matrix, under the assumption  that the matrix and its square roots are semi-simple, or symmetric, or orthogonal.
\end{abstract}

%\maketitle
%\vspace*{-0.1in}

{\small \tableofcontents}

\renewcommand{\thefootnote}{\fnsymbol{footnote}}
\footnotetext{
This research was partially supported by GNSAGA-INdAM (Italy).
}
\renewcommand{\thefootnote}{\arabic{footnote}}
\setcounter{footnote}{0}

\vspace*{-0.3in}

{\small Keywords: square root matrix, semi-simple matrix, symmetric matrix, orthogonal matrix, homogeneous space, trace metric, totally geodesic semi-Riemannian submanifold.}

\smallskip

{\small Mathematics~Subject~Classification~(2020): 15A24, 53C30, 15B10.}

\bigskip

\section*{Introduction}\label{intro}
A square root of an $n\times n$ matrix $M$ is any matrix $Y$ such that $Y^2 = M$.

As remarked in \cite{Hi2020}, matrix square roots arise in many applications, often in connection with other matrix problems such as polar decomposition, matrix geometric means, Markov chains, quadratic matrix equations, generalized eigenvalue problems and recently in connection with machine learning.
For more information we refer to \cite{HoJ1991}, \cite{Hi2008}, \cite{HoJ2013} and \cite{Hi2020}.

As suggested in \cite{Hi2020}, if $M$ satisfies certain given properties, it is natural to ask if there exist square roots, having the same or analogous properties. 

Moreover, if the set of such square roots of $M$ is not empty, it seems also natural to us to ask what sort of algebraic, topological, differential or metric structures can be induced by $M$ on this set.

In this paper we work with semi-simple matrices in $GL_n$ (the set of real $n \times n$  non-singular matrices).
It is well-known that such matrices admit real square roots if and only if their (possible) negative eigenvalues have even multiplicity: for this reason in this paper we work also under this further assumption.

Let $M \in GL_n$ be semi-simple, whose (possible) negative eigenvalues have even multiplicity;
our aim is to study the following sets:

- $\mathcal{SR}(M)$, the set of all real square roots of $M$ (see \S \ref{SR});

- $\mathcal{SSR}(M)$, the set of all real symmetric square roots of $M$, when $M$ is supposed to be symmetric positive definite (see \S \ref{SSR});

- $\mathcal{OSR}(M)$, the set of all orthogonal square roots of $M$, when $M$ is supposed to be special orthogonal (see \S \ref{OSR}).

Preliminary material is in \S\,\ref{prelim}, where, in particular, we recall the so-called \emph{trace metric}, $g$, and its properties on the manifold of real non-singular matrices and on its submanifolds of symmetric matrices with fixed signature and of orthogonal matrices (Recall \ref{trace_metric}). We have already considered such metric in \cite{DoPe2015}, \cite{DoPe2018a}, \cite{DoPe2019}, \cite{DoPe2020} and now we adapt our previous techniques and results to the study of square roots.

The trace metric (called also \emph{affine-invariant metric} or \emph{Fisher-Rao metric}) is widely considered in the setting of positive definite real matrices (together with some other metrics) in order to deal with many applications. For more information we refer, for instance and among many others, to \cite{BridHaef1999}, \\ \cite{BhaH2006}, \cite{Bha2007}, \cite{MoZ2011}, \cite{Bha2013}, \\ \cite{Amari2016}, \cite{Terras2016}, 
\cite{BarNie2017}), \cite{NieBar2019}).

In general the sets $\mathcal{SR}(M)$, $\mathcal{SSR}(M)$ and $\mathcal{OSR}(M)$ have not a structure of manifold, but each of them is disjoint union of suitable subsets, consisting in matrices having the same eigenvalues with the same multiplicities (Remarks-Definitions \ref{Jordan-form}, \ref{Jordan-form-SSR} and \ref{rem-RJS-RJA-ortog}). All these subsets are manifolds of various dimensions; we describe them explicitly and investigate their differential and metric properties (\S \ref{SR}, \S \ref{SSR} and \S \ref{OSR} respectively).

In \S\,\ref{SR} we prove that each of these suitable subsets of $\mathcal{SR}(M)$ is a closed embedded totally geodesic homogeneous semi-Riemannian submanifold of $(GL_n, g)$ (Proposition \ref{descr-SR(M)-indici} and Theorem \ref{SR(M)-metrica}).

In \S\,\ref{SSR} we prove that the above suitable subsets of $\mathcal{SSR}(M)$ agree with its connected components and that they are compact totally geodesic homogeneous semi-Riemannian submanifolds of $(GL_n, g)$, always diffeomorphic to the product of real Grassmannians (Proposition \ref{descr-SSR(M)-indici} and Theorem \ref{SSR(M)-metrica}).

Finally, in \S\,\ref{OSR} we prove that each of the previous suitable subsets of $\mathcal{OSR}(M)$ is a compact totally geodesic homogeneous Riemannian submanifold of the Riemannian manifold of orthogonal matrices endowed with the metric induced by the \emph{Frobenius metric} (Proposition \ref{prop-ortog} and Theorem \ref{manifold-O-con-indici}). Indeed this last metric agrees on $\mathcal{O}_n$ with the opposite of the trace metric.

\section{Preliminary facts}\label{prelim}

\begin{notas}\label{notazioni}\ \\
$M_n$ (and $Sym_n$): the $\mathbb{R}$-vector space of the real square matrices of order $n$ (which are symmetric);

$GL_n$: the multiplicative group of the non-singular real matrices of order $n$;

$GLSym_n(q)$: the matrices of $GL_n \cap Sym_n$ having $q$ positive eigenvalues and $n-q$ negative eigenvalues (i.e. having \emph{signature} $(q, n-q)$);

$\mathcal{O}_n$ (and $S\mathcal{O}_n$ or $\mathcal{O}_n^-$): the multiplicative group of real \emph{orthogonal} matrices of order $n$ (with determinant $1$ or $-1$);

$\mathfrak{so}_n$: the Lie algebra of skew-symmetric real matrices of order $n$;

$M_n(\mathbb{C})$: the $\mathbb{C}$-vector space of the complex square matrices of order $n$;

$GL_n(\mathbb{C})$: the multiplicative group of the non-singular complex matrices of order $n$;

$Herm_n$: the vector space of the complex \emph{hermitian} matrices of order $n$;

$Herm_n(\mu)$: the matrices of $GL_n(\mathbb{C}) \cap Herm_n$ having $\mu$ positive eigenvalues and $n - \mu$ negative eigenvalues (i.e. having signature $(\mu, n - \mu$));

$U_n$: the multiplicative group of complex \emph{unitary} matrices of order $n$; 

$I_n$: the identity matrix of order $n$;

${\bf i}$: the \emph{imaginary unit}.

\smallskip

We write $\bigsqcup_j X_j$ to emphasize the union of mutually disjoint sets.

\smallskip

For every $A \in M_n(\mathbb{C})$, $tr(A)$ is its \emph{trace}, $A^T$ is its \emph{transpose}, $A^*:=\overline{A\,}^T$ is its \emph{transpose conjugate}, $det(A)$ is its \emph{determinant} and, provided that $det(A) \ne 0$, $A^{-1}$ is its \emph{inverse}.

For every $\theta \in \mathbb{R}$, we denote
 $E_{\theta}:=\begin{pmatrix} 
 \cos \theta & - \sin \theta \\ 
\sin \theta &  \cos \theta
\end{pmatrix}$; 
hence $E_\theta = (\cos \theta)I_2 + (\sin \theta) E_{\pi/2}$.

\smallskip

If $B_1, \cdots , B_m$ are square matrices (of various orders), $B_1 \oplus \cdots \oplus B_m$ is the block diagonal square matrix with $B_1, \cdots , B_m$ on its diagonal and, for every square matrix $B$, $B^{\oplus m}$ denotes $B \oplus \dots \oplus B$ ($m$ times). The notations $(\pm I_0) \oplus B$ and $B \oplus (\pm I_0)$ simply indicate the matrix $B$.

If $\mathcal{S}_1, \dots , \mathcal{S}_m$ are sets of square matrices, then  $\mathcal{S}_1 \oplus \dots \oplus \mathcal{S}_m$ denotes the set of all matrices $B_1 \oplus \cdots \oplus B_m$ with $B_j \in \mathcal{S}_j$ for every $j$.

\smallskip
For every matrix $B \in GL_n$ we denote
$\mathcal{C}_B := \{X \in GL_n : [B, X] = 0\}$, where $[B,X] := BX-XB$ is the usual \emph{commutator} of $B$ and $X$.

It is easy to prove that $\mathcal{C}_B$ is a closed Lie subgroup of $GL_n$.

\smallskip

For any other notation and for information on the matrices, not explicitly recalled here, we refer to \cite{HoJ2013} and to \cite{Hi2008}.
\end{notas}

\begin{defi}
Let $M \in M_n$ any matrix. A \emph{real square root} of $M$ is every matrix of $M_n$, solving the matrix equation $X^2=M$.
\end{defi}

\begin{rem}\label{esist-rad-quadr}
When $M$ is non-singular, the following fact is well-known (see for instance \cite[Thm.\,1.23]{Hi2008}): 

$M \in GL_n$ has a real square root if and only if it has an even number of Jordan blocks of each size, for every 
negative eigenvalue.

Note that, if the matrix $M \in GL_n$ is semi-simple too, then it has a real square root if and only if each of its negative eigenvalues has even multiplicity; hence no matrix in $\mathcal{O}_n^-$ has real square roots.
\end{rem}

\begin{notas}\label{def-XSR}
Assume that $M \in GL_n$ is semi-simple and that its (possible) negative eigenvalues have even multiplicity. 

We want to study the following sets:

\smallskip

$\mathcal{SR}(M)$, the set of all real square roots of $M$ (see \S \ref{SR});

\smallskip

$\mathcal{SSR}(M):=\mathcal{SR}(M) \cap Sym_n$, the set of all real symmetric square roots of $M$, when $M$ is supposed to be symmetric positive definite (see \S \ref{SSR});

\smallskip

$\mathcal{OSR}(M) :=\mathcal{SR}(M) \cap \mathcal{O}_n$, the set of all orthogonal square roots of $M$, when $M$ is supposed to be an element of $S\mathcal{O}_n$ (see \S \ref{OSR}).
\end{notas}

\begin{recall}\label{trace_metric}
We can consider the so-called \emph{trace metric} on $GL_n$ , given by
$$g_A(V,W) = tr(A^{-1}V A^{-1}W)$$

for every $A \in GL_n$ and every $V, W \in T_A(GL_n) = M_n$.

\smallskip

The trace metric, $g$, induces a semi-Riemannian structure on $GL_n$ and its restrictions (denoted always with the same symbol $g$) induce a semi-Riemannian structure on $GLSym_n(q)$, for every $q$, a Riemannian structure on the manifold, $GLSym_n(n)$, of real symmetric positive definite matrices, while, on $\mathcal{O}_n$, the opposite metric, $-g$, agrees with the Riemannian metric induced by the usual (flat) \emph{Frobenius metric} of $M_n$.
Moreover $(\mathcal{O}_n , g)$ and all $GLSym_n(q)$ are totally geodesic semi-Riemannian submanifolds  of $(GL_n, g)$.

As recalled in Introduction, we have studied all these metrics and their properties in \cite{DoPe2015}, \cite{DoPe2018a}, \cite{DoPe2019}, \\ \cite{DoPe2020}, to which we refer for more information.
\end{recall}

\begin{remsdefs}\label{def-isom}

a) In this paper we consider the following maps, defined on $GL_n$:

- $j(X) := X^{-1}$ (\emph{inversion});

- $L_M (X) := MX$ (\emph{left-translation} by a fixed matrix $M \in GL_n$);

- $R_M(X):= XM$ (\emph{right-translation} by a fixed matrix $M \in GL_n$);

- $\widehat{\Gamma}_M(X): = MXM^{-1}$ (\emph{conjugation} by a fixed matrix $M \in GL_n$);

- $\Gamma_M(X): = MXM^T$ (\emph{congruence} by a fixed matrix $M \in GL_n$).

\smallskip

The above maps are isometries of $(GL_n, g)$, for any $M \in GL_n$, while they are isometries also of $(\mathcal{O}_n, \pm g)$, for any $M \in \mathcal{O}_n$. Finally the maps $j$ and $\Gamma_M$ are isometries of $(GLSym_n(q), g)$, for any $M \in GL_n$ 
(see again \cite{DoPe2015}), \cite{DoPe2018a}, \cite{DoPe2019}, \cite{DoPe2020} for more information).

\smallskip 

b) For every bijection $F$ of $GL_n$ (or of $\mathcal{O}_n$) the set of \emph{fixed points} of $F$ in $GL_n$ (or in $\mathcal{O}_n$) is denoted by $Fix(F)$.

\smallskip

For any matrix $M \in GL_n$, the set $Fix(L_M \circ j)$ agrees with the sets of square roots of $M$: indeed $Y \in Fix(L_M \circ j)$ if and only if $Y = MY^{-1}$ if and only if $Y^2 = M$.

If $M \in S\mathcal{O}_n$, $L_M \circ j$ defines also a map from $\mathcal{O}_n$ onto itself and, so, $Fix(L_M \circ j) = \mathcal{OSR}(M)$.
\end{remsdefs}

\begin{rem}\label{Popov}
Let $G$ be a real Lie group acting smoothly on a differentiable manifold $X$. The orbit of every $x \in X$ is an immersed submanifold of $X$, diffeomorphic to the homogeneous space $\dfrac{G}{G_x}$, where $G_x$ is the isotropy subgroup of $G$ at $x$. 

This submanifold is not necessarily embedded in $X$, but, if $G$ is compact, then all orbits are embedded submanifolds (see \cite[p.\,1]{Popov1991}). 
\end{rem}

\begin{remdef}\label{rho}
The mapping 
$\rho_1: \mathbb{C} \to M_2$, $\rho_1(z) =  Re(z) I_2 + Im(z) E_{\pi/2}$,
 is clearly a monomorphism of $\mathbb{R}$-algebras. 
 
Note that $\rho_1({\overline{z}}) = \rho_1(z)^T$
and that $\rho_1(z) \in GL_2$ as soon as $z \ne 0$. 

More generally, for any $h \ge 1$, we define the mapping $\rho_h: M_h(\mathbb{C}) \to M_{2h}$, which maps the $h \times h$ complex matrix $Z=(z_{ij})$ to the $(2h) \times (2h)$ block real matrix  $(\rho_1(z_{ij}))$, having $h^2$ blocks of order $2 \times 2$. 

We have already considered the mappings $\rho_h$ in \cite[Rem.\,2.8]{DoPe2020}; in particular we  recall that
$tr(\rho_h(Z)) = 2 Re(tr(Z)))$, $det(\rho_h(Z)) = |det(Z)|^2$ and that $\rho_h$ is a monomorphism of $\mathbb{R}$-algebras, whose restriction to $GL_h(\mathbb{C})$ has image into $GL_{2h}$ and it is a monomorphism of Lie groups.

Furthermore we have: $\rho_h(Z^*) = \rho_h(Z)^T$, so the restriction of $\rho_h$ to $U_h$ is a mono\-morphism of Lie groups
and $\rho_h (U_h)=  \rho_h (GL_h(\mathbb{C})) \cap S\mathcal{O}_{2h} = \rho_h (GL_h(\mathbb{C})) \cap \mathcal{O}_{2h}$; analogously the restriction of $\rho_h$ to $Herm_h$ has image into $Sym_{2h}$ and it is  a monomorphism of $\mathbb{R}$-vector spaces. 

To simplify the notations and in absence of ambiguity, we will omit to write the symbol $\rho_h$, so, for instance, we can consider $M_h(\mathbb{C})$ as an $\mathbb{R}$-subalgebra of $M_{2h}$, $GL_h(\mathbb{C})$ as a Lie subgroup of $GL_{2h}$,  $U_h$ as a Lie subgroup of $S\mathcal{O}_{2h}$ and $Herm_h$ as an $\mathbb{R}$-vector subspace of $Sym_{2h}$.
\end{remdef}

\section{Semi-simple case}\label{SR}

Aim of this section is to study $\mathcal{SR}(M)$: the set of all real square roots of a non-singular semi-simple matrix $M$, whose (possible) negative eigenvalues have even multiplicity (Notations \ref{def-XSR}).

\begin{rem}\label{AA^2semisemplici}
Let $A \in GL_n$. Then $A$ is semisimple if and only if $A^2$ is semisimple.

\smallskip

One implication is trivial.

For the other, assume that $A^2$ is semisimple. The multiplicative Jordan-Chevalley decomposition implies that there is a semisimple matrix $S$ and a nilpotent matrix $N$ such that $A= S(I+N)=(I+N)S$ (see for instance \cite[\S\,4.2]{Humph1975} and also \cite{DoPe2017}, \cite{DoPe2018b}). 

Now $A^2=S^2(I+2N+N^2)$ with $S^2$ semisimple and $2N+N^2$ nilpotent, so from the uniqueness of the multiplicative Jordan-Chevalley decomposition we get $A^2=S^2$ and $N^2 + 2N = 0$. Hence the minimal polynomial, $x^k$, of $N$ is a divisor of $x^2+2x$, hence $k=1$, $N=0$ and $A$ is semisimple.
\end{rem}

\begin{remsdefs}\label{Jordan-form}

a) Let $M \in GL_n$ be a semisimple matrix, whose every (possible) negative eigenvalue has even multiplicity, and denote its eigenvalues in the following way: 

\smallskip

- the distinct positive eigenvalues are: $\lambda_1 < \lambda_2 < \cdots < \lambda_p$ ($p \ge 0$) with multiplicity $h_1, h_2, \cdots , h_p$ respectively (if $p \ge 1$);

\smallskip

- the distinct non-real eigenvalues are: $\rho_{1,1}\exp(\pm {\bf i} \theta_1), \cdots , \rho_{1,s_1}\exp(\pm {\bf i} \theta_{1})$, 

$\rho_{2,1}\exp(\pm {\bf i} \theta_2), \cdots , \rho_{2,s_2}\exp(\pm {\bf i} \theta_{2})$ up to 
$\rho_{r,1}\exp(\pm {\bf i} \theta_r), \cdots , \rho_{r,s_r}\exp(\pm {\bf i} \theta_{r})$, 

where both $\rho_{lt} \exp(\pm {\bf i} \theta_{l})$ have multiplicity $m_{lt}$ for every admissible $l$ and $t$ ($r \ge 0$) and  where $0 < \theta_{1} < \theta_2 < \cdots < \theta_r < \pi$ and $0 < \rho_{i1} < \rho_{i2} < \cdots  < \rho_{is_i}$ for every $i = 1, \cdots , r$ (if $r \ge 1$);

\smallskip

- the distinct negative eigenvalues are: $-\zeta_1 > -\zeta_2 > \cdots > -\zeta_q$ ($q \ge 0$) with multiplicity $2 k_1 , 2 k_2, \cdots , 2k_q$ respectively (if $q \ge 1$).

\smallskip

Note that $\sum_{i=1}^p h_i + 2 \sum_{l=1}^r \sum_{t=1}^{s_l} m_{lt} + 2\sum_{i=1}^q k_i =n$.

\smallskip

We denote by $\mathcal{J}_M$ the matrix
\begin{center}
$\mathcal{J}_M:= \big( \bigoplus_{i=1}^p \lambda_i I_{h_i}\big) \oplus \big(\bigoplus_{l=1}^r \bigoplus_{t=1}^{s_l} \rho_{lt} E_{\theta_l}^{\oplus m_{lt}}\big) \oplus \big( \bigoplus_{i=1}^{q}(- \zeta_i) I_{2 k_i} \big)
$.
\end{center}

Then there is a matrix $C \in GL_n$ such that
$$M= C \mathcal{J}_M C^{-1}$$

(see for instance \cite[Cor.\,3.4.1.10 p.\,203]{HoJ2013}).

\smallskip

Following \cite[Remark-Definition\,1.7]{DoPe2020}, we call such matrix $\mathcal{J}_M$ \emph{the real Jordan standard form} (shortly: RJS form) of $M$. 

\smallskip

b) Let $M$ be as in (a) and denote by $Y \in GL_n$ a square root of $M$. By Remark \ref{AA^2semisemplici}, $Y$ is semisimple and its eigenvalues are (complex) square roots of the eigenvalues of $M$. Hence the eigenvalues of $Y$ are 

\smallskip

- $\sqrt{\lambda_i}$ with multiplicity $u_i \ge 0$ and $- \sqrt{\lambda_i}$ with multiplicity $v_i \ge 0$ where $u_i+v_i = h_i$ ($i = 1, \cdots , p$, if $p \ge 1$);

\smallskip

- $\sqrt{\rho_{lt}} \exp( \pm {\bf i} \theta_l/2)$ both with multiplicity $\mu_{lt}$ and $\sqrt{\rho_{lt}} \exp( \pm {\bf i} (\theta_l/2 -\pi))$ both with multiplicity $\nu_{lt}$, where $\mu_{lt} + \nu_{lt} = m_{lt}$ (for every admissible $l$ and $t$);

\smallskip

- $\pm \sqrt{\zeta_i} {\bf i}$ both with multiplicity $k_i$ ($i = 1 , \cdots , q$, if $q \ge 1$).

\smallskip

We denote by $\widetilde{\mathcal{J}}_Y$ the matrix

$\widetilde{\mathcal{J}}_Y:=$

$[\bigoplus_{i=1}^p (\sqrt{\lambda_i} (I_{u_i} \oplus (-I_{v_i})))]\oplus [\bigoplus_{l=1}^r \bigoplus_{t=1}^{s_l} \sqrt{\rho_{lt}} (E_{\theta_l/2}^{\oplus \mu_{lt}} \oplus E_{(\theta_l/2 -\pi)}^{\oplus \nu_{lt}} )] \oplus [\bigoplus_{i=1}^q \sqrt{\zeta_i} E_{\pi/2}^{\oplus k_i}]$.

Then there exists a matrix $K \in GL_n$ such that
$$
Y=K \widetilde{\mathcal{J}}_Y K^{-1}.
$$
Note that 
$$
E_{\phi-\pi} = - E_{\phi}, \ \forall \phi,  \   \mbox{ and } \
(\widetilde{\mathcal{J}}_Y)^2 = \mathcal{J}_M.
$$  
Remark that the last equality implies that $\mathcal{C}_{\widetilde{\mathcal{J}}_Y} \subseteq \mathcal{C}_{{\mathcal{J}}_M}$.

Analogously to \cite[Remark-Definition\,1.8]{DoPe2020}, we can call the matrix $\widetilde{\mathcal{J}}_Y$ \emph{the real Jordan auxiliary form} (shortly: RJA form) of $Y$.

\smallskip

c) We denote by 
$\mathcal{SR}(M)_{(u_1, \cdots , u_p)}^{(\mu_{11}, \cdots ,\mu_{1s_1}; \cdots ; \mu_{r1}, \cdots ,\mu_{rs_r})}$ the subset of $\mathcal{SR}(M)$ of all real square roots of $M$ whose eigenvalues have the same multiplicities of the eigenvalues of $Y$ as in (b).

\smallskip

Note that any two matrices in $\mathcal{SR}(M)_{(u_1, \cdots , u_p)}^{(\mu_{11}, \cdots ,\mu_{1s_1}; \cdots ; \mu_{r1}, \cdots ,\mu_{rs_r})}$ have the same RJA form; indeed the matrix $M$ and the two families of indices $(u_1, \cdots , u_p)$ and 

$(\mu_{11}, \cdots ,\mu_{1s_1}; \cdots ; \mu_{r1}, \cdots ,\mu_{rs_r})$ determine also the $v_i$'s, the $\nu_{lt}$'s and the $k_j$'s.

\smallskip

One or both families of indices $(u_1, \cdots , u_p)$ and 
$(\mu_{11}, \cdots ,\mu_{1s_1}; \cdots ; \mu_{r1}, \cdots ,\mu_{rs_r})$ can be empty (indeed $p$ and $r$  can vanish). In this case we can use the same notations, by omitting the family or both families of indices.

\smallskip

Note that, if $p=r=0$ (and so $q \ge 1$), then the RJA form of every square root of $M$ is $\bigoplus_{i=1}^q \sqrt{\zeta_i} E_{\pi/2}^{\oplus k_i}$ and that $\mathcal{SR}(M)$ (without any lower and upper index) agrees with the set of all real square roots of $M$.

\smallskip

Note also that, if at least one between $p$ and $r$ is non-zero, then $\mathcal{SR}(M) = \sqcup \, \mathcal{SR}(M)_{(u_1, \cdots , u_p)}^{(\mu_{11}, \cdots ,\mu_{1s_1}; \cdots ; \mu_{r1}, \cdots ,\mu_{rs_r})}$, where the disjoint union is taken on all indices $0 \le u_i \le h_i$ for every $1\le i \le p$ (if $p \ge 1$) and $0 \le \mu_{lt} \le m_{lt}$ for every $1 \le l \le r$ and for every $1 \le t \le s_l$ (if $r \ge 1$).

\smallskip

In the following statements of this section it is understood that one or both families of the above indices can  be empty.
\end{remsdefs}

\begin{prop}\label{descr-SR(M)-indici}
Let $M \in GL_n$ be a semisimple matrix, whose every (possible) negative eigenvalue has even multiplicity and let $C \in GL_n$ be such that $M= C \mathcal{J}_M C^{-1}$ as in Remarks-Definitions \ref{Jordan-form} (a). 

Fix any set $\mathcal{SR}(M)_{(u_1, \cdots , u_p)}^{(\mu_{11}, \cdots ,\mu_{1s_1}; \cdots ; \mu_{r1}, \cdots ,\mu_{rs_r})}$ as in Remarks-Definitions \ref{Jordan-form} (c) and denote by $\widetilde{\mathcal{J}}$ the RJA form of every $Y \in \mathcal{SR}(M)_{(u_1, \cdots , u_p)}^{(\mu_{11}, \cdots ,\mu_{1s_1}; \cdots ; \mu_{r1}, \cdots ,\mu_{rs_r})}$. Then

\smallskip

$\mathcal{SR}(M)_{(u_1, \cdots , u_p)}^{(\mu_{11}, \cdots ,\mu_{1s_1}; \cdots ; \mu_{r1}, \cdots ,\mu_{rs_r})} = \{C X \widetilde{\mathcal{J}} X^{-1} C^{-1} : X \in \mathcal{C}_{\mathcal{J}_M}\} = \\ \widehat{\Gamma}_C(\{ X \widetilde{\mathcal{J}} X^{-1}  : X \in \mathcal{C}_{\mathcal{J}_M}\})$.

\smallskip

Moreover $\mathcal{SR}(M)_{(u_1, \cdots , u_p)}^{(\mu_{11}, \cdots ,\mu_{1s_1}; \cdots ; \mu_{r1}, \cdots ,\mu_{rs_r})}$ is a closed embedded submanifold of $GL_n$, diffeomorphic to the  homogeneous space $\dfrac{\mathcal{C}_{\mathcal{J}_M}}{\mathcal{C}_{\widetilde{{\mathcal{J}}}}}$.
\end{prop}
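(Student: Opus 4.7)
The plan is to present $\mathcal{SR}(M)_{(u_1, \dots, u_p)}^{(\mu_{11}, \dots, \mu_{rs_r})}$ as the image under the diffeomorphism $\widehat{\Gamma}_C$ of a single orbit for a smooth Lie group action, extract its homogeneous space structure from Remark \ref{Popov}, and upgrade the merely immersed conclusion of Remark \ref{Popov} to a closed embedded one by establishing closedness in $GL_n$ by hand. This final step is the main obstacle, because $\mathcal{C}_{\mathcal{J}_M}$ is not compact in general, so the compactness clause of Remark \ref{Popov} is unavailable.

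I would first prove the set equality by treating the two inclusions in turn. Starting from $Y \in \mathcal{SR}(M)_{(u_1, \dots)}^{(\mu_{11}, \dots)}$, Remarks-Definitions \ref{Jordan-form}(b) supplies $K \in GL_n$ with $Y = K \widetilde{\mathcal{J}} K^{-1}$; squaring and using both $\widetilde{\mathcal{J}}^2 = \mathcal{J}_M$ and $M = C \mathcal{J}_M C^{-1}$ yields $K \mathcal{J}_M K^{-1} = C \mathcal{J}_M C^{-1}$, so $X := C^{-1} K \in \mathcal{C}_{\mathcal{J}_M}$ parametrizes $Y$ as required. In the other direction, for any $X \in \mathcal{C}_{\mathcal{J}_M}$, the matrix $Y := \widehat{\Gamma}_C(X \widetilde{\mathcal{J}} X^{-1})$ satisfies $Y^2 = M$ by a short calculation using $\widetilde{\mathcal{J}}^2 = \mathcal{J}_M$ together with $X \mathcal{J}_M = \mathcal{J}_M X$, and it is similar to $\widetilde{\mathcal{J}}$, hence lies in the prescribed stratum.

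Next I would recognize $\{X \widetilde{\mathcal{J}} X^{-1} : X \in \mathcal{C}_{\mathcal{J}_M}\}$ as the orbit of $\widetilde{\mathcal{J}}$ under the smooth conjugation action of the Lie group $\mathcal{C}_{\mathcal{J}_M}$ on $GL_n$. Its isotropy is $\mathcal{C}_{\mathcal{J}_M} \cap \mathcal{C}_{\widetilde{\mathcal{J}}}$, which collapses to $\mathcal{C}_{\widetilde{\mathcal{J}}}$ thanks to the inclusion $\mathcal{C}_{\widetilde{\mathcal{J}}} \subseteq \mathcal{C}_{\mathcal{J}_M}$ recorded in Remarks-Definitions \ref{Jordan-form}(b). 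Remark \ref{Popov} then produces an immersed submanifold diffeomorphic to $\mathcal{C}_{\mathcal{J}_M} / \mathcal{C}_{\widetilde{\mathcal{J}}}$, and transport by the diffeomorphism $\widehat{\Gamma}_C$ from Remarks-Definitions \ref{def-isom}(a) carries this structure onto $\mathcal{SR}(M)_{(u_1, \dots)}^{(\mu_{11}, \dots)}$.

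The hard part is passing from immersed to closed embedded. My plan is to verify that $\mathcal{SR}(M)_{(u_1, \dots)}^{(\mu_{11}, \dots)}$ is closed in $GL_n$ and then invoke the standard fact that an orbit of a smooth Lie group action which is locally closed — in particular, closed — in the ambient manifold is automatically an embedded submanifold. For the closedness: $\mathcal{SR}(M) = \{Y \in GL_n : Y^2 = M\}$ is closed as the preimage of a point under the continuous squaring map; by Remark \ref{AA^2semisemplici} every $Y \in \mathcal{SR}(M)$ is semisimple, so its characteristic polynomial is determined by the eigenvalue multiplicities and takes only finitely many values on $\mathcal{SR}(M)$. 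Since $Y \mapsto \det(t I - Y)$ is continuous, each stratum $\mathcal{SR}(M)_{(u_1, \dots)}^{(\mu_{11}, \dots)}$ is the preimage of a single polynomial and so is both open and closed in $\mathcal{SR}(M)$, hence closed in $GL_n$, completing the argument.
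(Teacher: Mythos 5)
Your proposal is correct and follows essentially the same route as the paper: the same two-inclusion argument for the set equality via $K = CX$, the same identification with the conjugation orbit of $\widetilde{\mathcal{J}}$ under $\mathcal{C}_{\mathcal{J}_M}$ with isotropy $\mathcal{C}_{\widetilde{\mathcal{J}}}$, and the same closedness argument based on the constancy of the characteristic polynomial on each stratum (the paper phrases it with sequences, you phrase it as the stratum being the preimage of one of finitely many polynomials, which is the identical idea). The final upgrade from immersed to embedded via closedness is also the step the paper takes, citing Montgomery--Zippin.
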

\begin{proof}
If $Y \in \mathcal{SR}(M)_{(u_1, \cdots , u_p)}^{(\mu_{11}, \cdots ,\mu_{1s_1}; \cdots ; \mu_{r1}, \cdots ,\mu_{rs_r})}$, then $Y =K \widetilde{\mathcal{J}}_Y K^{-1}$ for some $K \in GL_n$ and $Y^2=K (\widetilde{\mathcal{J}}_Y)^2 K^{-1} = K \mathcal{J}_M K^{-1} = C \mathcal{J}_M C^{-1} = M$. Hence $C^{-1}K \in \mathcal{C}_{{\mathcal{J}}_M}$ and, so, $K=CX$ for some $X \in \mathcal{C}_{{\mathcal{J}}_M}$. This gives one inclusion of the first equality. The reverse inclusion is a simple computation. The second equality follows directly from the definition of $\widehat{\Gamma}_C$.

Now $\widehat{\Gamma}_C$ is a diffeomorphism of $GL_n$, so it suffices to prove that the set 

$\{ X \widetilde{\mathcal{J}} X^{-1}  : X \in \mathcal{C}_{\mathcal{J}_M}\} = \mathcal{SR}(\mathcal{J}_M)_{(u_1, \cdots , u_p)}^{(\mu_{11}, \cdots ,\mu_{1s_1}; \cdots ; \mu_{r1}, \cdots ,\mu_{rs_r})}$ has the requested properties.
Let us consider the left action by conjugation of $\mathcal{C}_{\mathcal{J}_M}$ on $GL_n$. 

The set $\{ X \widetilde{\mathcal{J}} X^{-1}  : X \in \mathcal{C}_{\mathcal{J}_M}\}$ is the orbit of $\widetilde{\mathcal{J}}$. By Remark \ref{Popov}, this set is an immersed submanifold of $GL_n$, diffeomorphic to the  homogeneous space $\dfrac{\mathcal{C}_{\mathcal{J}_M}}{\mathcal{C}_{\widetilde{{\mathcal{J}}}}}$, being $\mathcal{C}_{\widetilde{{\mathcal{J}}}} \subseteq \mathcal{C}_{\mathcal{J}_M}$ and being $\mathcal{C}_{\widetilde{{\mathcal{J}}}}$ the isotropy subgroup of the action. 

Finally $\mathcal{SR}(\mathcal{J}_M)_{(u_1, \cdots , u_p)}^{(\mu_{11}, \cdots ,\mu_{1s_1}; \cdots ; \mu_{r1}, \cdots ,\mu_{rs_r})}$ is  closed in $GL_n$. Indeed if $\{Y_m\}$ is a sequence in $\mathcal{SR}(\mathcal{J}_M)_{(u_1, \cdots , u_p)}^{(\mu_{11}, \cdots ,\mu_{1s_1}; \cdots ; \mu_{r1}, \cdots ,\mu_{rs_r})}$, converging to $Y \in GL_n$, then 

$Y^2 =\mathcal{J}_M$ too and the characteristic polynomial of $Y$ is the same characteristic polynomial of all $Y_m$'s (constant with respect to $m$). 

Hence 
$Y \in \mathcal{SR}(\mathcal{J}_M)_{(u_1, \cdots , u_p)}^{(\mu_{11}, \cdots ,\mu_{1s_1}; \cdots ; \mu_{r1}, \cdots ,\mu_{rs_r})}$ and this last is closed and, therefore, it is an embedded submanifold of $GL_n$ (see for instance \cite[\S\,2.13  Theorem, p.\,65]{MonZip1055}).
\end{proof}

\begin{lemma}\label{lemma-su-J}

a) Let $\mathcal{J}$ be the matrix
\begin{center}
$\mathcal{J}= \big( \bigoplus_{i=1}^p \lambda_i I_{h_i}\big) \oplus \big(\bigoplus_{l=1}^r \bigoplus_{t=1}^{s_l} \rho_{lt} E_{\theta_l}^{\oplus m_{lt}}\big) \oplus \big( \bigoplus_{i=1}^{q}(- \zeta_i) I_{2 k_i} \big)
$,
\end{center}
where all indices satisfy the numerical conditions of Remarks-Definitions \ref{Jordan-form} (a).

Then the Lie group of non-singular matrices, commuting with $\mathcal{J}$, is 
\begin{center}
$\mathcal{C}_{\mathcal{J}} = (\bigoplus_{i=1}^p GL_{h_i}) \oplus (\bigoplus_{l=1}^r \bigoplus_{t=1}^{s_l} GL_{m_{lt}}(\mathbb{C})) \oplus (\bigoplus_{i=1}^q GL_{2k_i})$.
\end{center}

b) Let $\widetilde{\mathcal{J}}$be the matrix
$\widetilde{\mathcal{J}}:=$

$[\bigoplus_{i=1}^p (\sqrt{\lambda_i} (I_{u_i} \oplus (-I_{v_i})))]\oplus [\bigoplus_{l=1}^r \bigoplus_{t=1}^{s_l} \sqrt{\rho_{lt}} (E_{\theta_l/2}^{\oplus \mu_{lt}} \oplus E_{(\theta_l/2 -\pi)}^{\oplus \nu_{lt}} )] \oplus [\bigoplus_{i=1}^q \sqrt{\zeta_i} E_{\pi/2}^{\oplus k_i}]$,

where all indices satisfy the numerical conditions of Remarks-Definitions \ref{Jordan-form} (b).

Then the Lie group of non-singular matrices, commuting with $\widetilde{\mathcal{J}}$, is 

$\mathcal{C}_{\widetilde{\mathcal{J}}} = [\bigoplus_{i=1}^p (GL_{u_i} \oplus GL_{v_i})] \oplus [\bigoplus_{l=1}^r \bigoplus_{t=1}^{s_l}(GL_{\mu_{lt}}(\mathbb{C}) \oplus GL_{\nu_{lt}}(\mathbb{C}))] \oplus [\bigoplus_{i=1}^q GL_{k_i}(\mathbb{C})]$.
\end{lemma}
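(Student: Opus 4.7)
The approach I would take for both parts is to exploit the fact that each of $\mathcal{J}$ and $\widetilde{\mathcal{J}}$ is block diagonal, with blocks having pairwise disjoint spectra (over $\mathbb{C}$). Recall the standard fact: if $A = A_1 \oplus \cdots \oplus A_N \in M_n$ and the $A_i$'s have pairwise disjoint spectra, then any matrix commuting with $A$ must itself be block diagonal $X_1 \oplus \cdots \oplus X_N$ with $X_i$ commuting with $A_i$; this follows because the generalized eigenspace projectors are polynomials in $A$, so they commute with $X$ and force each block of $X$ off the diagonal to vanish. Intersecting with $GL_n$, one obtains $\mathcal{C}_A = \mathcal{C}_{A_1} \oplus \cdots \oplus \mathcal{C}_{A_N}$ (each centralizer taken in the appropriate $GL_{n_i}$). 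So the proof reduces to verifying disjointness of spectra and then computing each individual centralizer.

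For the disjointness, in part (a) the eigenvalues of the three types of blocks are positive reals $\lambda_i$ (mutually distinct), non-real complex numbers $\rho_{lt}e^{\pm {\bf i}\theta_l}$ (mutually distinct by the assumed ordering of the $\theta_l$'s and $\rho_{lt}$'s) and negative reals $-\zeta_i$ (mutually distinct), so all three families are separated. In part (b), for fixed $i$ the blocks $\sqrt{\lambda_i}I_{u_i}$ and $-\sqrt{\lambda_i}I_{v_i}$ have the distinct eigenvalues $\pm\sqrt{\lambda_i}$; for fixed $(l,t)$ the blocks $\sqrt{\rho_{lt}} E_{\theta_l/2}^{\oplus\mu_{lt}}$ and $\sqrt{\rho_{lt}} E_{\theta_l/2-\pi}^{\oplus\nu_{lt}}$ have eigenvalues $\sqrt{\rho_{lt}}e^{\pm{\bf i}\theta_l/2}$ and $-\sqrt{\rho_{lt}}e^{\pm{\bf i}\theta_l/2}$, which are distinct (and non-zero since $0<\theta_l/2<\pi/2$); finally across different $(l,t)$ the moduli $\sqrt{\rho_{lt}}$ or the arguments $\theta_l/2 \pmod{\pi}$ separate them. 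The blocks $\sqrt{\zeta_i}E_{\pi/2}^{\oplus k_i}$ have purely imaginary eigenvalues $\pm\sqrt{\zeta_i}{\bf i}$, clearly disjoint from all other classes.

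Next, I would compute the centralizer of each individual block. A scalar block $cI_k$ is centralized by all of $GL_k$, which handles the real-eigenvalue blocks of $\mathcal{J}$ and $\widetilde{\mathcal{J}}$. The essential case is a block of the form $cE_\phi^{\oplus m}$ with $\phi\in(0,\pi)\cup(-\pi,0)$ (so $\sin\phi\neq 0$): since $E_\phi^{\oplus m} = (\cos\phi)I_{2m} + (\sin\phi)E_{\pi/2}^{\oplus m}$ and the scalar part is central, a real $2m \times 2m$ matrix commutes with $cE_\phi^{\oplus m}$ if and only if it commutes with $E_{\pi/2}^{\oplus m} = \rho_m({\bf i}\,I_m)$. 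But $\rho_m({\bf i}\,I_m)$ is exactly the endomorphism defining the complex structure $\mathbb{R}^{2m}\cong\mathbb{C}^m$ used by $\rho_m$, so its centralizer in $M_{2m}$ is precisely the $\mathbb{C}$-linear endomorphisms, namely $\rho_m(M_m(\mathbb{C}))$. Intersecting with $GL_{2m}$ yields $\rho_m(GL_m(\mathbb{C}))$, which by the identification of Remark-Definition \ref{rho} is $GL_m(\mathbb{C})$. Applying this to every complex-eigenvalue block (with $\phi=\theta_l$ in part (a); with $\phi=\theta_l/2$, $\theta_l/2-\pi$ or $\pi/2$ in part (b)) gives each of the stated $GL_\bullet(\mathbb{C})$ factors.

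Assembling the factors block by block produces the claimed decompositions of $\mathcal{C}_\mathcal{J}$ and $\mathcal{C}_{\widetilde{\mathcal{J}}}$. The main conceptual step, and the only one that is not pure block bookkeeping, is the identification of the centralizer of a block $cE_\phi^{\oplus m}$ ($\sin\phi\neq 0$) with $GL_m(\mathbb{C})$ via $\rho_m$; once this is in hand, the two formulas are immediate consequences of the spectrum-disjointness argument.
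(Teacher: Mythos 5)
Your argument is correct and complete: the reduction to a block-diagonal centralizer via the pairwise disjointness of the spectra of the diagonal blocks, followed by the identification of the centralizer of a block $cE_\phi^{\oplus m}$ (with $\sin\phi\neq 0$) with $\rho_m(GL_m(\mathbb{C}))$ through the complex structure $E_{\pi/2}^{\oplus m}=\rho_m({\bf i}\,I_m)$, is exactly the standard computation, and your verification of spectral disjointness in both (a) and (b) is sound. The paper gives no details here, merely asserting that the statement is ``similar to'' \cite[Lemma 3.3]{DoPe2020}, so your proof supplies precisely the argument that reference is expected to contain.
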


\begin{proof}
Part (a) is similar to \cite[Lemma\,3.3]{DoPe2020} and part (b) can be easily obtained by similar arguments.  
\end{proof}

\begin{thm}\label{SR(M)-metrica}
Let $M \in GL_n$ be a semisimple matrix, whose every (possible) negative eigenvalue has even multiplicity.
Then 

a) every connected component of $\mathcal{SR}(M)$ is a closed totally geodesic homogeneous semi-Riemannian submanifold of $(GL_n, g)$;

b) every manifold $\mathcal{SR}(M)_{(u_1, \cdots , u_p)}^{(\mu_{11}, \cdots ,\mu_{1s_1}; \cdots ; \mu_{r1}, \cdots ,\mu_{rs_r})}$, as in Remarks-Definitions \ref{Jordan-form} (c), is diffeomorphic to the following product of homogeneous spaces:
\begin{center}
$\big( \prod_{i=1}^p \dfrac{GL_{h_i}}{GL_{u_i} \oplus GL_{v_i}} \big) \times \big( \prod_{l=1}^r \prod_{t=1}^{s_l} \dfrac{GL_{m_{lt}}(\mathbb{C})}{GL_{\mu_{lt}}(\mathbb{C}) \oplus GL_{\nu_{lt}}(\mathbb{C})} \big) \times \big( \prod_{i=1}^q \dfrac{GL_{2 k_i}}{GL_{k_i}(\mathbb{C})}  \big)$,
\end{center}
where $v_i=h_i-u_i$ and $\nu_{lt}=  m_{lt}- \mu_{lt}$.
\end{thm}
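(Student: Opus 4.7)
The plan is to combine Proposition \ref{descr-SR(M)-indici} and Lemma \ref{lemma-su-J} with the characterisation $\mathcal{SR}(M) = Fix(L_M \circ j)$ of Remarks-Definitions \ref{def-isom}, exploiting that $L_M \circ j$ is an isometry of $(GL_n, g)$. For part (b), I start from the diffeomorphism $\mathcal{SR}(M)_{(u_1, \cdots , u_p)}^{(\mu_{11}, \cdots ,\mu_{1s_1}; \cdots ; \mu_{r1}, \cdots ,\mu_{rs_r})} \cong \mathcal{C}_{\mathcal{J}_M}/\mathcal{C}_{\widetilde{\mathcal{J}}}$ supplied by Proposition \ref{descr-SR(M)-indici}. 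Lemma \ref{lemma-su-J} writes both groups as a direct sum of blocks, one for each distinct eigenvalue class of $M$. The inclusion $\mathcal{C}_{\widetilde{\mathcal{J}}} \hookrightarrow \mathcal{C}_{\mathcal{J}_M}$ is block-diagonal with respect to this decomposition, embedding $GL_{u_i} \oplus GL_{v_i}$ into $GL_{h_i}$, $GL_{\mu_{lt}}(\mathbb{C}) \oplus GL_{\nu_{lt}}(\mathbb{C})$ into $GL_{m_{lt}}(\mathbb{C})$, and $GL_{k_i}(\mathbb{C})$ into $GL_{2k_i}$. Hence the quotient factors as the direct product of the three types of block quotients, which are precisely the factors appearing in the statement.

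For part (a), closedness of each connected component is immediate from Proposition \ref{descr-SR(M)-indici} combined with the finite disjoint decomposition of $\mathcal{SR}(M)$ in Remarks-Definitions \ref{Jordan-form} (c): any component lies in exactly one closed embedded piece $\mathcal{SR}(M)_{(u_1, \cdots , u_p)}^{(\mu_{11}, \cdots ,\mu_{1s_1}; \cdots ; \mu_{r1}, \cdots ,\mu_{rs_r})}$. Homogeneity follows because $\widehat{\Gamma}_C$ carries each component to an orbit of the identity component of $\mathcal{C}_{\mathcal{J}_M}$ acting by conjugation on $\widetilde{\mathcal{J}}$. For the totally geodesic property I would use $\mathcal{SR}(M) = Fix(L_M \circ j)$: since $L_M \circ j$ fixes each connected component $N$ pointwise, $d(L_M \circ j)_Y$ is the identity on $T_Y N$ for every $Y \in N$, so by uniqueness of geodesics the $(GL_n, g)$-geodesic through $Y$ with initial velocity $V \in T_Y N$ is preserved by the isometry $L_M \circ j$, hence lies in $\mathcal{SR}(M)$ and, by continuity, in $N$.

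The delicate point is the non-degeneracy of the induced metric, required to call $N$ a semi-Riemannian submanifold. A direct computation gives $T_Y(\mathcal{SR}(M)) = \{V \in M_n : YV + VY = 0\}$, on which the trace metric reduces to $g_Y(V, W) = -tr(V M^{-1} W)$ via $Y^{-1} V = - V Y^{-1}$. By homogeneity it suffices to verify non-degeneracy at $Y = \widetilde{\mathcal{J}}$ in each component, and the block structure of Lemma \ref{lemma-su-J} reduces this to a block-by-block check. In each block one diagonalises $Y$ over $\mathbb{C}$, identifies the anticommuting subspace as the span of elementary matrices $e_{ij}$ whose $i$-th and $j$-th eigenvalues of $Y$ are opposite, and observes that the bilinear form pairs $e_{ij}$ with $e_{ji}$ non-trivially; this routine but slightly intricate verification is the main obstacle to overcome.
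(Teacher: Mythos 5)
Your proposal is correct and follows essentially the same route as the paper: part (b) via the diffeomorphism with $\dfrac{\mathcal{C}_{\mathcal{J}_M}}{\mathcal{C}_{\widetilde{\mathcal{J}}}}$ from Proposition \ref{descr-SR(M)-indici} together with the block decompositions of Lemma \ref{lemma-su-J}, and part (a) via closedness of the pieces $\mathcal{SR}(M)_{(u_1, \cdots , u_p)}^{(\mu_{11}, \cdots ,\mu_{1s_1}; \cdots ; \mu_{r1}, \cdots ,\mu_{rs_r})}$, homogeneity under the conjugation action, and the fixed-point-set argument for $L_M \circ j$ (which the paper does not spell out but delegates to \cite[Ex.\,8.1, Prop.\,8.3]{KoNo2}). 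The only genuine divergence is the point you rightly single out as delicate, the non-degeneracy of the induced metric: the paper disposes of it by observing that each component is an orbit of a group of isometries of $(GL_n,g)$ and citing Kobayashi--Nomizu, whereas you propose a direct verification. Your sketch does close: $T_Y$ of the component is $\{V: YV+VY=0\}$ (this is exactly the $+1$-eigenspace of $d(L_M\circ j)_Y(V)=-YVY^{-1}$), the form there is $g_Y(V,W)=-tr(VM^{-1}W)$, and at $Y=\widetilde{\mathcal{J}}$, after diagonalizing over $\mathbb{C}$ with eigenvalues $\sigma_1,\dots,\sigma_n$, the anticommuting space is spanned by the $e_{ij}$ with $\sigma_i+\sigma_j=0$, on which $-tr(e_{ij}M^{-1}e_{ji})=-\sigma_j^{-2}\ne 0$ while all other pairings vanish; non-degeneracy of this complex-bilinear form implies that of its real restriction. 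So your "main obstacle" is indeed only a routine computation, and your version has the advantage of being self-contained where the paper relies on an external reference stated in the Riemannian setting.
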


\begin{proof}
Since $\mathcal{SR}(M) = \widehat{\Gamma}_C(\mathcal{SR}(\mathcal{J}_M))$ (with $C$ as in Proposition \ref{descr-SR(M)-indici}) and $\widehat{\Gamma}_C$ is an isometry of $(GL_n, g)$, it suffices to prove (a) for the matrix $\mathcal{J}_M$ instead of $M$. 
Every connected component of $\mathcal{SR}(\mathcal{J}_M)$ is contained in some 

$\mathcal{SR}(\mathcal{J}_M)_{(u_1, \cdots , u_p)}^{(\mu_{11}, \cdots ,\mu_{1s_1}; \cdots ; \mu_{r1}, \cdots ,\mu_{rs_r})} = \{ X \widetilde{\mathcal{J}} X^{-1}  : X \in \mathcal{C}_{\mathcal{J}_M}\}$ (remember the notations of Proposition \ref{descr-SR(M)-indici}) and this last is the orbit of $\widetilde{\mathcal{J}}$ under the action by conjugation of $\mathcal{C}_{\mathcal{J}_M}$ on $GL_n$. Since all conjugation maps are isometries of $(GL_n, g)$, every connected component of $\mathcal{SR}(\mathcal{J}_M)_{(u_1, \cdots , u_p)}^{(\mu_{11}, \cdots ,\mu_{1s_1}; \cdots ; \mu_{r1}, \cdots ,\mu_{rs_r})}$ is a semi-Riemannian submanifold of $(GL_n, g)$.

Now the map $L_M \circ j: (GL_n, g) \to (GL_n, g)$, $L_M \circ j(X)= M X^{-1}$ is an isometry of $(GL_n, g)$, whose set of fixed points is $\mathcal{SR}(M)$. Hence we can conclude (a) by \cite[Ex.\,8.1 p.\,61, Prop.\,8.3 p.\,56]{KoNo2}, remembering again Proposition \ref{descr-SR(M)-indici} too.

Part (b) follows from Proposition \ref{descr-SR(M)-indici}  and from Lemma \ref{lemma-su-J}.
\end{proof}

\begin{defi}\label{prin-gen}
Let $M \in GL_n$ be a semisimple matrix, whose every (possible) negative eigenvalue has even multiplicity. We say that a matrix $X$ is a \emph{generalized principal square root} of $M$, if $X^2=M$ and every eigenvalue of $X$ has principal argument in $[-\pi/2, \pi/2]$.

This definition is more general than the usual one of \emph{principal square root}  (see for instance \cite[Thm.\,1.29 p.\,20]{Hi2008}).

We denote by $\mathcal{PSR}(M)$ the set of all generalized principal square roots of $M$.
\end{defi}

\begin{rems}\label{princ} Let $M \in GL_n$ be a semisimple matrix, whose every (possible) negative eigenvalue has even multiplicity.

a) We have $\mathcal{PSR}(M) =\mathcal{SR}(M)_{(h_1, \cdots , h_p)}^{(m_{11}, \cdots ,m_{1s_1}; \cdots ; m_{r1}, \cdots , m_{rs_r})}$ and so, by Theorem \ref{SR(M)-metrica}, it is a single point if $M$ has no negative eigenvalue, according to the usual definition (see \cite[Thm.\,1.29 p.\,20]{Hi2008}), otherwise it is  diffeomorphic to the product of homogeneous spaces:
$\prod_{i=1}^q \dfrac{GL_{2 k_i}}{GL_{k_i}(\mathbb{C})}$ ($q$ is the number of distinct negative eigenvalues and $2k_1 , \cdots , 2k_q$ are their multiplicities).

In any case $\mathcal{PSR}(M)$ has $2^q$ connected components, since every factor has $2$ connected components.

b) Every $\mathcal{SR}(M)_{(u_1, \cdots , u_p)}^{(\mu_{11}, \cdots ,\mu_{1s_1}; \cdots ; \mu_{r1}, \cdots ,\mu_{rs_r})}$ has dimension 

$2[ \sum_{i=1}^p u_i(h_i- u_i)+ 2 \sum_{l=1}^r \sum_{t=1}^{s_l} m_{lt}(\mu_{lt}-\mu_{lt})  + \sum_{i=1}^q k_i^2 ]$.

From this, we get that $\mathcal{SR}(M)$ if finite if and only if the eigenvalues of $M$ have multiplicity $1$ and no eigenvalue of $M$ is negative. In this case $M$ has exactly $2^p \, 2^{(n-p)/2}=2^{(p+n)/2}$ distinct real square roots, where $p$ is the number of positive eigenvelues of $M$.
\end{rems}

\section{Symmetric case}\label{SSR}

Aim of this section is to study $\mathcal{SSR}(M)$: the set of all real symmetric square roots of $M$, with $M$ symmetric positive definite (Notations \ref{def-XSR}).

\begin{rem}
If $X \in GL_n$ is symmetric, then $X^2$ is symmetric positive definite. Hence, to have real symmetric square roots of a real symmetric matrix $M$, we must assume that $M$  is positive definite too.
\end{rem}

\begin{remsdefs}\label{Jordan-form-SSR}
Let $M \in GL_n$ be a symmetric positive definite matrix with distinct (real positive) eigenvalues $\lambda_1 < \lambda_2, \cdots  < \lambda_p$ with multiplicity $h_1, h_2, \cdots , h_p$ respectively, so the RJS form of $M$ is 
$\mathcal{J}_M = \bigoplus_{i=1}^p \lambda_i I_{h_i}$; in this case there exists $Q \in \mathcal{O}_n$ such that $M= Q \mathcal{J}_M Q^T$.

We denote by $\mathcal{SSR}(M)_{(u_1, \cdots , u_p)} = \mathcal{SR}(M)_{(u_1, \cdots , u_p)} \cap Sym_n$
(with $0 \le u_i \le h_i$ for every $1\le i \le p$): the set of all $Y \in \mathcal{SSR}(M)$, having  

$\widetilde{\mathcal{J}} = \bigoplus_{i=1}^p [\sqrt{\lambda_i} (I_{u_i} \oplus (-I_{(h_i-u_i)}))]$ as its RJA form.

We have $\mathcal{SSR}(M)_{(u_1, \cdots , u_p)} \subseteq GLSym_n(u)$ (with $u:=\sum_{i=1}^p u_i$) and, if 

$Y \in \mathcal{SSR}(M)_{(u_1, \cdots , u_p)}$, then there exists $R \in \mathcal{O}_n$ such that $Y = R \widetilde{\mathcal{J}} R^T$.

\smallskip

Note also that $\mathcal{SSR}(M) = \sqcup \, \mathcal{SSR}(M)_{(u_1, \cdots , u_p)}$, where the disjoint union is taken on all indices $u_1, \cdots , u_p$ as above. 
\end{remsdefs}

\begin{prop}\label{descr-SSR(M)-indici}
Let $M$ be a symmetric positive definite real matrix with distinct (real positive) eigenvalues $\lambda_1 < \lambda_2, \cdots  < \lambda_p$ with multiplicity $h_1, h_2, \cdots , h_p$ respectively and let $Q \in \mathcal{O}_n$ be such that $M= Q \mathcal{J}_M Q^T$. 

Fix any set $\mathcal{SSR}(M)_{(u_1, \cdots , u_p)}$ as above and denote by $\widetilde{\mathcal{J}}$ the RJA form of any $Y \in \mathcal{SSR}(M)_{(u_1, \cdots , u_p)}$. Then

\smallskip

$\mathcal{SSR}(M)_{(u_1, \cdots , u_p)} = \{Q X \widetilde{\mathcal{J}} X^T Q^T : X \in \mathcal{C}_{\mathcal{J}_M} \cap \mathcal{O}_n \} =  \Gamma_Q(\mathcal{SSR}(\mathcal{J}_M)_{(u_1, \cdots , u_p)})$,

where $\mathcal{SSR}(\mathcal{J}_M)_{(u_1, \cdots , u_p)}= \bigoplus_{i=1}^p \sqrt{\lambda_i} \, \big( \mathcal{O}_{h_i} \cap GLSym_{h_i}(u_i) \big)$.

\smallskip

Moreover $\mathcal{SSR}(M)_{(u_1, \cdots , u_p)}$ is a compact submanifold of $GLSym_n(u)$ 

(with $u:=\sum_{i=1}^p u_i$), diffeomorphic to the  homogeneous space $\dfrac{\mathcal{C}_{\mathcal{J}_M} \cap \mathcal{O}_n}{\mathcal{C}_{\widetilde{{\mathcal{J}}}} \cap \mathcal{O}_n}$.
\end{prop}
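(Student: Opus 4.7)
The plan is to first reduce to the case $M = \mathcal{J}_M$ via the congruence isometry $\Gamma_Q$, exactly as in the proof of Proposition \ref{descr-SR(M)-indici} but using $\Gamma_Q$ (rather than $\widehat\Gamma_C$) since $Q\in\mathcal{O}_n$ means $Q^T=Q^{-1}$ and the congruence preserves symmetry. So the two displayed equalities for $\mathcal{SSR}(M)_{(u_1,\ldots,u_p)}$ follow once we have established the identification
$\mathcal{SSR}(\mathcal{J}_M)_{(u_1,\ldots,u_p)}=\{X\widetilde{\mathcal{J}}X^T : X\in\mathcal{C}_{\mathcal{J}_M}\cap\mathcal{O}_n\}$.

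For that identification I would argue as follows. If $Y\in\mathcal{SSR}(\mathcal{J}_M)_{(u_1,\ldots,u_p)}$, then $Y$ is symmetric with eigenvalues $\pm\sqrt{\lambda_i}$ of the prescribed multiplicities $u_i$, $h_i-u_i$; hence $Y$ is orthogonally diagonalisable and one can arrange the orthogonal diagonaliser so that $Y=R\widetilde{\mathcal{J}}R^T$ for some $R\in\mathcal{O}_n$ (matching the ordering of eigenvalues in $\widetilde{\mathcal{J}}$). Squaring gives $\mathcal{J}_M=Y^2=R\mathcal{J}_M R^T=R\mathcal{J}_M R^{-1}$, hence $R\in\mathcal{C}_{\mathcal{J}_M}\cap\mathcal{O}_n$. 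The reverse inclusion is a direct computation using $X^T=X^{-1}$ and $\widetilde{\mathcal{J}}^2=\mathcal{J}_M$. Using Lemma \ref{lemma-su-J}(a), in the symmetric case $\mathcal{C}_{\mathcal{J}_M}=\bigoplus_{i=1}^p GL_{h_i}$ so $\mathcal{C}_{\mathcal{J}_M}\cap\mathcal{O}_n=\bigoplus_{i=1}^p\mathcal{O}_{h_i}$, and the block-diagonal action reduces the description to block factors $\sqrt{\lambda_i}\,X_i(I_{u_i}\oplus(-I_{h_i-u_i}))X_i^T$ with $X_i\in\mathcal{O}_{h_i}$. The orbit of $I_{u_i}\oplus(-I_{h_i-u_i})$ under orthogonal congruence is exactly the set of symmetric orthogonal matrices of signature $(u_i,h_i-u_i)$, i.e.\ $\mathcal{O}_{h_i}\cap GLSym_{h_i}(u_i)$; this gives the claimed direct-sum description.

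For the homogeneous-space and manifold structure, I would apply Remark \ref{Popov} to the smooth action of the compact Lie group $G:=\mathcal{C}_{\mathcal{J}_M}\cap\mathcal{O}_n$ on $GL_n$ by $X\cdot Z = XZX^T = XZX^{-1}$. The orbit of $\widetilde{\mathcal{J}}$ is $\mathcal{SSR}(\mathcal{J}_M)_{(u_1,\ldots,u_p)}$; the isotropy subgroup is $\{X\in G : X\widetilde{\mathcal{J}}X^{-1}=\widetilde{\mathcal{J}}\}=\mathcal{C}_{\widetilde{\mathcal{J}}}\cap\mathcal{O}_n$ (note $\mathcal{C}_{\widetilde{\mathcal{J}}}\subseteq\mathcal{C}_{\mathcal{J}_M}$ as recalled in Remarks-Definitions \ref{Jordan-form}). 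Since $G$ is compact, Remark \ref{Popov} delivers an embedded submanifold diffeomorphic to $G/(\mathcal{C}_{\widetilde{\mathcal{J}}}\cap\mathcal{O}_n)$; compactness of this orbit follows from compactness of $G$ via the continuous orbit map. Transporting by the isometry $\Gamma_Q$ (which preserves $Sym_n$ because $Q\in\mathcal{O}_n$) gives the same conclusion for $\mathcal{SSR}(M)_{(u_1,\ldots,u_p)}$.

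Finally, that $\mathcal{SSR}(M)_{(u_1,\ldots,u_p)}\subseteq GLSym_n(u)$ is just the signature count: every element is a symmetric non-singular matrix with $u=\sum u_i$ positive and $n-u$ negative eigenvalues. The only delicate point in the whole argument is the very first step, namely the orthogonal-diagonalisation reduction $Y=R\widetilde{\mathcal{J}}R^T$ with $R$ necessarily landing in $\mathcal{C}_{\mathcal{J}_M}\cap\mathcal{O}_n$; everything else is an orbit-theoretic translation of what was already done in Section \ref{SR}, now performed inside the compact subgroup $\mathcal{C}_{\mathcal{J}_M}\cap\mathcal{O}_n$ so that the resulting orbit is automatically embedded and compact.
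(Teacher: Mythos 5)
Your proposal is correct and takes essentially the same route as the paper's proof: reduction via the isometry $\Gamma_Q$, the diagonalisation-and-squaring argument to land the conjugating matrix in $\mathcal{C}_{\mathcal{J}_M}\cap\mathcal{O}_n$, the block computation from Lemma \ref{lemma-su-J}(a) giving $\bigoplus_{i=1}^p \sqrt{\lambda_i}\,(\mathcal{O}_{h_i}\cap GLSym_{h_i}(u_i))$, and the orbit/isotropy description under the congruence action of the compact group $\mathcal{C}_{\mathcal{J}_M}\cap\mathcal{O}_n$ via Remark \ref{Popov}. The paper compresses the first step into ``as in Proposition \ref{descr-SR(M)-indici}'', which is precisely the argument you spell out.
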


\begin{proof}
As in Propositions \ref{descr-SR(M)-indici} we can proof that 
$\mathcal{SSR}(M)_{(u_1, \cdots , u_p)} =  \\ \{Q X \widetilde{\mathcal{J}} X^T Q^T : X \in \mathcal{C}_{\mathcal{J}_M} \cap \mathcal{O}_n \} =  \Gamma_Q(\{ X \widetilde{\mathcal{J}}   X^T  : X \in  \mathcal{C}_{\mathcal{J}_M} \cap \mathcal{O}_n\})$

and $\{ X \widetilde{\mathcal{J}}   X^T  : X \in  \mathcal{C}_{\mathcal{J}_M} \cap \mathcal{O}_n \}$ is clearly equal to $\mathcal{SSR}(\mathcal{J}_M)_{(u_1, \cdots , u_p)}$.

By Lemma \ref{lemma-su-J} (a) we have 
$\mathcal{C}_{\mathcal{J}_M} \cap \mathcal{O}_n = (\bigoplus_{i=1}^p GL_{h_i}) \cap \mathcal{O}_n = \bigoplus_{i=1}^p \mathcal{O}_{h_i}$.

Hence, if $X = \oplus_{i=1}^p V_i  \in \bigoplus_{i=1}^p \mathcal{O}_{h_i}= \mathcal{C}_{\mathcal{J}_M} \cap \mathcal{O}_n$, then we obtain:

$X \widetilde{\mathcal{J}} X^T = \bigoplus_{i=1}^p \sqrt{\lambda_i} \, V_i (I_{u_i} \oplus (-I_{(h_i-u_i)}))V_{i}^T$.

In the last expression the $i$-th summand is clearly a generic element of 

$\sqrt{\lambda_i} \, (\mathcal{O}_{h_i} \cap GLSym_{h_i}(u_i))$; this concludes the first part.

Now $\mathcal{SSR}(M)_{(u_1, \cdots , u_p)}$ and $\mathcal{SSR}(\mathcal{J}_M)_{(u_1, \cdots , u_p)}$ are diffeomorphic, since $\Gamma_{Q}$ is a diffeomorphism of $GLSym_n(u)$.

By considering the action by congruence of $\mathcal{C}_{\mathcal{J}_M} \cap \mathcal{O}_n$ on $GLSym_n(u)$, 

$\mathcal{SSR}(\mathcal{J}_M)_{(u_1, \cdots , u_p)}$ is the orbit of $\widetilde{\mathcal{J}}$, while the isotropy subgroup at $\widetilde{\mathcal{J}}$ is 

$\mathcal{C}_{\widetilde{{\mathcal{J}}}} \cap \mathcal{O}_n \subseteq \mathcal{C}_{\mathcal{J}_M}  \cap \mathcal{O}_n$.

Hence $\mathcal{SSR}(\mathcal{J}_M)_{(u_1, \cdots , u_p)}$ (and also $\mathcal{SSR}(M)_{(u_1, \cdots , u_p)}$) is a compact submanifold of $GLSym_n(u)$ diffeomorphic to $\dfrac{\mathcal{C}_{\mathcal{J}_M} \cap \mathcal{O}_n}{\mathcal{C}_{\widetilde{{\mathcal{J}}}} \cap \mathcal{O}_n}$ (see Remark \ref{Popov}). This concludes the proof.
\end{proof}

\begin{thm}\label{SSR(M)-metrica}
Let $M$ be a symmetric positive definite real matrix with distinct (real positive) eigenvalues $\lambda_1 < \lambda_2 \cdots  < \lambda_p$ with multiplicity $h_1, h_2, \cdots , h_p$ respectively. Then

a) every manifold $\mathcal{SSR}(M)_{(u_1, \cdots , u_p)}$, as above, is diffeomorphic to the following product of homogeneous spaces:
\begin{center}
$\prod_{i=1}^p \dfrac{\mathcal{O}_{h_i}}{\mathcal{O}_{u_i} \oplus \mathcal{O}_{v_i}} $,
\end{center}
where $v_i=h_i-u_i$ (note that $\dfrac{\mathcal{O}_{h_i}}{\mathcal{O}_{u_i} \oplus \mathcal{O}_{v_i}}$ is diffeomorphic to the real Grassmannian $\mathbb{G}_{u_i}(\mathbb{R}^{h_i})$);

b) the connected components of $\mathcal{SSR}(M)$ are the manifolds $\mathcal{SSR}(M)_{(u_1, \cdots , u_p)}$ and each of them is a compact totally geodesic homogeneous semi-Riemannian submanifold of $(GL_n, g)$.
\end{thm}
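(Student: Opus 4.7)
The plan is to read off part (a) directly from the homogeneous description already available in Proposition \ref{descr-SSR(M)-indici}, and then derive part (b) by combining connectedness of the pieces with a fixed-point-set-of-isometries argument, exactly in the spirit of the proof of Theorem \ref{SR(M)-metrica}.

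For part (a), I would start from the identification $\mathcal{SSR}(M)_{(u_1,\dots,u_p)} \cong (\mathcal{C}_{\mathcal{J}_M}\cap \mathcal{O}_n)/(\mathcal{C}_{\widetilde{\mathcal{J}}}\cap \mathcal{O}_n)$ given in Proposition \ref{descr-SSR(M)-indici} and compute each intersection using Lemma \ref{lemma-su-J}. Because $M$ is symmetric positive definite, the RJS form reduces to $\mathcal{J}_M = \bigoplus_i \lambda_i I_{h_i}$ (no complex blocks, no negative eigenvalues), so Lemma \ref{lemma-su-J}(a) gives $\mathcal{C}_{\mathcal{J}_M} = \bigoplus_i GL_{h_i}$, while Lemma \ref{lemma-su-J}(b) gives $\mathcal{C}_{\widetilde{\mathcal{J}}} = \bigoplus_i (GL_{u_i}\oplus GL_{v_i})$. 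Intersecting block-diagonal matrices with $\mathcal{O}_n$ amounts to requiring each block to be orthogonal, hence $\mathcal{C}_{\mathcal{J}_M}\cap\mathcal{O}_n = \bigoplus_i \mathcal{O}_{h_i}$ and $\mathcal{C}_{\widetilde{\mathcal{J}}}\cap\mathcal{O}_n = \bigoplus_i (\mathcal{O}_{u_i}\oplus\mathcal{O}_{v_i})$. The quotient factors as a product, yielding $\prod_i \mathcal{O}_{h_i}/(\mathcal{O}_{u_i}\oplus\mathcal{O}_{v_i})$, which is the standard homogeneous presentation of $\mathbb{G}_{u_i}(\mathbb{R}^{h_i})$.

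For part (b), connectedness of each $\mathcal{SSR}(M)_{(u_1,\dots,u_p)}$ follows immediately from (a) since the real Grassmannians are connected, hence so is any finite product of them. The pieces are pairwise disjoint, because the RJA form (in particular the indices $(u_1,\dots,u_p)$) is determined by the eigenvalue multiplicities of $Y$, and they cover $\mathcal{SSR}(M)$ by Remarks-Definitions \ref{Jordan-form-SSR}; hence they are exactly the connected components. Compactness and the homogeneous structure are already contained in Proposition \ref{descr-SSR(M)-indici}. For the remaining claim that each component is a totally geodesic semi-Riemannian submanifold of $(GL_n,g)$, I would exhibit $\mathcal{SSR}(M)$ as the intersection of fixed-point sets of two isometries of $(GL_n,g)$: the map $L_M\circ j$, whose fixed-point set is $\mathcal{SR}(M)$ by Remarks-Definitions \ref{def-isom}(b), and the transposition $T\colon X\mapsto X^T$, whose fixed-point set in $GL_n$ is $GL_n\cap Sym_n$. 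A short check shows $T$ is an isometry of the trace metric: for $A\in GL_n$ and $V,W\in M_n$, $g_{A^T}(V^T,W^T) = tr((A^{-1})^T V^T (A^{-1})^T W^T) = tr((WA^{-1}VA^{-1})^T) = tr(A^{-1}VA^{-1}W) = g_A(V,W)$ by the cyclic property of trace. Then the same Kobayashi–Nomizu result cited in the proof of Theorem \ref{SR(M)-metrica} (applied to the family $\{L_M\circ j,\,T\}$) tells us that every connected component of $Fix(L_M\circ j)\cap Fix(T) = \mathcal{SSR}(M)$ is a closed totally geodesic semi-Riemannian submanifold of $(GL_n,g)$.

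The main point requiring care is the realization of $\mathcal{SSR}(M)$ as an intersection of fixed-point sets of isometries and the verification that transposition is such an isometry; once these are in place, everything else is assembly of already-proved statements. A secondary bookkeeping issue is checking that intersecting a block-diagonal commutant with $\mathcal{O}_n$ distributes block by block—this is immediate since orthogonality of a block-diagonal matrix is equivalent to orthogonality of each diagonal block.
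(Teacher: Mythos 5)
Your proof is correct, and for part (a) and for the identification of the connected components it follows the paper exactly (Proposition \ref{descr-SSR(M)-indici} together with Lemma \ref{lemma-su-J}, then connectedness and compactness of each Grassmannian factor). The genuine difference lies in how you obtain the totally geodesic property. The paper writes $\mathcal{SSR}(M)_{(u_1, \cdots , u_p)}=\mathcal{SR}(M)_{(u_1, \cdots , u_p)} \cap GLSym_n(u)$ and concludes because both factors are totally geodesic in $(GL_n,g)$ --- the first by Theorem \ref{SR(M)-metrica}, the second by the external reference \cite[Prop.\,2.3]{DoPe2019} --- so their intersection is totally geodesic. You instead realize $\mathcal{SSR}(M)=Fix(L_M\circ j)\cap Fix(T)$ with $T(X)=X^T$, check (correctly) that $T$ is an isometry of the trace metric, and invoke the fixed-point theorem for the family $\{L_M\circ j,\,T\}$. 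Your route is more self-contained: it avoids the citation to \cite{DoPe2019} and sidesteps the (mild but real) issue of why the intersection of two totally geodesic submanifolds is again a totally geodesic submanifold, a point the paper passes over quickly; the paper's route, on the other hand, reuses Theorem \ref{SR(M)-metrica} directly and introduces no new isometry to verify. Two points you should make explicit to finish cleanly: first, the common fixed-point set of the family equals $Fix(\langle L_M\circ j,\,T\rangle)$, so the version of the Kobayashi--Nomizu statement for a group of isometries applies; second, since the ambient metric is only semi-Riemannian, you still need to observe --- as the paper does via the phrase ``as in the proof of Theorem \ref{SR(M)-metrica}'' --- that the induced metric on each $\mathcal{SSR}(M)_{(u_1, \cdots , u_p)}$ is non-degenerate, which follows from its homogeneity under the congruence action of the isometry group $\mathcal{C}_{\mathcal{J}_M}\cap\mathcal{O}_n$ once non-degeneracy is checked at the single point $\widetilde{\mathcal{J}}$.
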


\begin{proof} 
From Proposition \ref{descr-SSR(M)-indici}, $\mathcal{SSR}(M)_{(u_1, \cdots , u_p)}$ is diffeomorphic to the  homogeneous space $\dfrac{\mathcal{C}_{\mathcal{J}_M} \cap \mathcal{O}_n}{\mathcal{C}_{\widetilde{{\mathcal{J}}}} \cap \mathcal{O}_n}$. By Lemma \ref{lemma-su-J}, $\mathcal{C}_{\mathcal{J}_M} \cap \mathcal{O}_n = \bigoplus_{i=1}^p \mathcal{O}_{h_i}$ and 
$\mathcal{C}_{\widetilde{\mathcal{J}}} \cap \mathcal{O}_n = \bigoplus_{i=1}^p (\mathcal{O}_{u_i} \oplus \mathcal{O}_{v_i})$. This allows to conclude (a).

Every manifold $\dfrac{\mathcal{O}_{h_i}}{\mathcal{O}_{u_i} \oplus \mathcal{O}_{v_i}}$ is diffeomorphic to a real Grassmannian and, so, it is compact and connected. This implies that $\mathcal{SSR}(M)_{(u_1, \cdots , u_p)}$ is compact and connected too. Since $\mathcal{SSR}(M)$ is disjoint union of such manifolds, these last are its connected components.

As in the proof of Theorem \ref{SR(M)-metrica}, it is possible to prove that $\mathcal{SSR}(M)_{(u_1, \cdots , u_p)}$ is a semi-Riemannian submanifold of $(GL_n, g)$.

Now every connected component of $\mathcal{SR}(M)$ is a totally geodesic semi-Riemannian submanifold of $(GL_n, g)$ again by Theorem \ref{SR(M)-metrica} and, for every $0 \le q \le n$, $GLSym_n(q)$ is a totally geodesic semi-Riemannian submanifold of $(GL_n, g)$ by 

\cite[Prop.\,2.3]{DoPe2019}. 

We can conclude part (b), since $\mathcal{SSR}(M)_{(u_1, \cdots , u_p)}=\mathcal{SR}(M)_{(u_1, \cdots , u_p)} \cap GLSym_n(u)$ (with $u := \sum_{i=1}^p u_i$): intersection of two totally geodesic submanifolds.
\end{proof}

\begin{rems} Assume the same hypotheses and notations as in Theorem \ref{SSR(M)-metrica}.

a) If $u_1 = h_1, \cdots , u_p=h_p$ or if $u_1 = \cdots  = u_p =0$, then the manifold 

$\mathcal{SSR}(M)_{(u_1, \cdots , u_p)}$ is a single point. In the first case this point is a symmetric positive definite matrix: the usual (positive definite) square root of $M$, denoted by $\sqrt{M}=M^{1/2}$; in the second case it is a symmetric negative definite matrix and precisely $-\sqrt{M}=-M^{1/2}$.

b) $\mathcal{SSR}(M)$ is a finite set if and only if all eigenvalues of $M$ have multiplicity $1$; in this case $\mathcal{SSR}(M)$ has cardinality $2^n$ and $\mathcal{SR}(M)= \mathcal{SSR}(M)$.
\end{rems}

\section{Orthogonal case}\label{OSR}

Aim of this section is to study $\mathcal{OSR}(M)$, the set of all orthogonal square roots of $M$, where $M$ is in $S\mathcal{O}_n$ (Notations \ref{def-XSR}). We have already remarked that 

$Fix(L_M \circ j) = \mathcal{OSR}(M)$ (remember Remarks-Definitions \ref{def-isom} (b)).

\begin{remsdefs}\label{rem-RJS-RJA-ortog}

a) Remembering \ref{Jordan-form}, the RJS form of $M \in S\mathcal{O}_n$ is 
$$(*) \ \ \ \ \ \mathcal{J}_M= I_h \oplus E_{\theta_1}^{\oplus m_1} \oplus \cdots \oplus E_{\theta_r}^{\oplus  m_r} \oplus(-I_{2k})$$ 
with $h, r, k \ge 0$, $h+ 2 m_1 + \cdots + 2 m_r + 2k =n$ and $0 < \theta_1 < \theta_2 < \cdots < \theta_r < \pi$;

the eigenvalues of $M$ are: $1$ with multiplicity $h \ge 0$, $\exp(\pm {\bf i} \theta_1)$ both with multiplicity $m_1$, $\cdots$ , up to $\exp(\pm {\bf i} \theta_r)$ both with multiplicity $m_r$ ($m_j >0 \ \ \forall j$, if $r > 0$) and $-1$ with multiplicity $2 k \ge 0$.

The eigenvalues of any (orthogonal) square root $Y$ of $M$ are necessarily: $1$ with multiplicity $u$, $-1$ with multiplicity $h-u$ (where $0 \le u \le h$),  $\exp(\pm  \frac{1}{2} {\bf i} \theta_j)$ both with multiplicity $\mu_j$ and $\exp(\pm {\bf i}( \frac{1}{2} \theta_j - \pi ))$ both with multiplicity $m_j - \mu_j$ ($0 \le \mu_j \le m_j$), for every $1 \le j \le r$, if $r >0$, and finally $\pm {\bf i}$ both with multiplicity  $k$, so that the RJA form of $Y$ is the following:
$$(**) \ \ \ \ \ 
\widetilde{\mathcal{J}} := I_u \oplus (- I_{(h-u)}) \oplus E_{\frac{1}{2} \theta_1}^{\oplus \mu_1} \oplus E_{(\frac{1}{2} \theta_1 - \pi)}^{\oplus (m_1-\mu_1)} \oplus \cdots \oplus E_{\frac{1}{2} \theta_r}^{\oplus \mu_r} \oplus E_{(\frac{1}{2} \theta_r - \pi)}^{\oplus (m_r-\mu_r)} \oplus E_{\frac{\pi}{2}}^{\oplus k}. 
$$

b) We denote by $\mathcal{OSR}(M)_u^{(\mu_1, \cdots , \mu_r)}$ the subset of $\mathcal{OSR}(M)$ of matrices whose RJA form is of the type $(**)$ above.  

If $Y \in \mathcal{OSR}(M)_u^{(\mu_1, \cdots , \mu_r)}$, then there is a matrix $H \in \mathcal{O}_n$ such that $Y = H \widetilde{\mathcal{J}} H^T$.

On the indices $u$ and $\mu_1, \cdots , \mu_r$ we can make analogous observations as in Remarks-Definitions \ref{Jordan-form} (c); for instance, the matrix $M$, the index $u$ and the family of indices $(\mu_1, \cdots , \mu_r$) determine the RJA form above; moreover the same agreements as in Remarks-Definitions \ref{Jordan-form} (c) hold, if one or both families of indices are empty.

Note that, if both families of indices are empty (i.e. $h=r=0$), then $M=- I_n$ with $n$ even.

Of course $\det(Y)= (-1)^{h-u}$ for every $Y \in \mathcal{OSR}(M)_u^{(\mu_1, \cdots , \mu_r)}$ and, if $h >0$ or $r> 0$, then $\mathcal{OSR}(M)$ is the disjoint union of all $\mathcal{OSR}(M)_u^{(\mu_1, \cdots , \mu_r)}$ 
for every $0 \le u \le h$ (if $h>0$) and $0 \le \mu_j \le m_j$, $1 \le j \le r$ (if $r>0$).

In the following statements of this section it is understood that one or both families of the above indices can  be empty.
\end{remsdefs}

\begin{prop}\label{prop-ortog}
Let $M \in S \mathcal{O}_n$ whose RJS form, $\mathcal{J}_M$, is as in Remarks-Definitions \ref{rem-RJS-RJA-ortog} $(*)$ and let $C_0 \in \mathcal{O}_n$ such that $M= C_0 \mathcal{J}_M C_0^T$. 
For every $0 \le u \le h$, $0 \le \mu_j \le m_j$, $1 \le j \le r$, let 
$\widetilde{\mathcal{J}}$ be the matrix of the form $(**)$ as in Remarks-Definitions \ref{rem-RJS-RJA-ortog}. Then

\smallskip

$
\mathcal{OSR}(M)_u^{(\mu_1, \cdots , \mu_r)}= \{ C_0 X \widetilde{\mathcal{J}}   X^T C_0^T : X \in  \mathcal{C}_{\mathcal{J}_M} \cap \mathcal{O}_n\} =  \Gamma_{C_0}(\mathcal{OSR}(\mathcal{J}_M)_u^{(\mu_1, \cdots , \mu_r)}),
$

\smallskip

where 
$\mathcal{OSR}(\mathcal{J}_M)_u^{(\mu_1, \cdots , \mu_r)}:=$

$[\mathcal{O}_h \cap Sym_h(u)] \oplus [ \bigoplus_{j=1}^r \exp(\dfrac{1}{2}{\bf i} \theta_j) (U_{m_j} \cap Herm_{m_j}(\mu_j) ]\oplus [\mathcal{O}_{2k} \cap \mathfrak{so}_{2k}]$.

Moreover $\mathcal{OSR}(M)_u^{(\mu_1, \cdots , \mu_r)}$ is a compact submanifold of $\mathcal{O}_n$, diffeomorphic to the homogeneous space $\dfrac{\mathcal{C}_{\mathcal{J}_M} \cap \mathcal{O}_n}{\mathcal{C}_{\widetilde{{\mathcal{J}}}} \cap \mathcal{O}_n}$.
\end{prop}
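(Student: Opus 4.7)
The plan is to follow closely the two-step strategy already used for Propositions \ref{descr-SR(M)-indici} and \ref{descr-SSR(M)-indici}, replacing similarity by congruence and exploiting that $C_0 \in \mathcal{O}_n$. Because $\Gamma_{C_0}$ restricts to a diffeomorphism of $\mathcal{O}_n$ (indeed to an isometry of $(\mathcal{O}_n, \pm g)$ by Remarks-Definitions \ref{def-isom}(a)), it suffices to prove the orbit description and the manifold statement when $M=\mathcal{J}_M$ and then transport everything via $\Gamma_{C_0}$; this gives the two displayed equalities in the statement for free.

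The first step is to establish the orbit description. If $Y \in \mathcal{OSR}(\mathcal{J}_M)_u^{(\mu_1,\ldots,\mu_r)}$, then by Remarks-Definitions \ref{rem-RJS-RJA-ortog}(b) there is $H \in \mathcal{O}_n$ with $Y = H\widetilde{\mathcal{J}}H^T$; squaring and using $H^TH = I_n$ together with $\widetilde{\mathcal{J}}^2 = \mathcal{J}_M$ gives $\mathcal{J}_M = Y^2 = H\mathcal{J}_M H^T$, so $H \in \mathcal{C}_{\mathcal{J}_M} \cap \mathcal{O}_n$. Conversely, for any $X \in \mathcal{C}_{\mathcal{J}_M}\cap\mathcal{O}_n$ the matrix $X\widetilde{\mathcal{J}}X^T$ is orthogonal, squares to $X\widetilde{\mathcal{J}}^2X^T = X\mathcal{J}_M X^T = \mathcal{J}_M$, and has the same RJA form as $\widetilde{\mathcal{J}}$ because orthogonal conjugation preserves spectra.

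The second step is to identify this orbit with the explicit block product. Lemma \ref{lemma-su-J}(a) combined with $\rho_h(GL_h(\mathbb{C})) \cap \mathcal{O}_{2h} = \rho_h(U_h)$ from Remark-Definition \ref{rho} yields
\[
\mathcal{C}_{\mathcal{J}_M}\cap\mathcal{O}_n \;=\; \mathcal{O}_h \,\oplus\, \bigoplus_{j=1}^{r} U_{m_j} \,\oplus\, \mathcal{O}_{2k}.
\]
Writing $X = V_0 \oplus \bigoplus_j W_j \oplus V_\infty$ accordingly, the congruence $X\widetilde{\mathcal{J}}X^T$ splits block by block. For the first block, $V_0(I_u \oplus (-I_{h-u}))V_0^T$ ranges over $\mathcal{O}_h \cap Sym_h(u)$, since a real matrix is orthogonal and symmetric of signature $(u,h-u)$ exactly when it is orthogonally conjugate to $I_u\oplus(-I_{h-u})$. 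For each middle block I would rewrite, using $\exp({\bf i}(\theta_j/2-\pi)) = -\exp(\tfrac{1}{2}{\bf i}\theta_j)$ together with the embedding of Remark-Definition \ref{rho},
\[
E_{\theta_j/2}^{\oplus\mu_j}\oplus E_{\theta_j/2-\pi}^{\oplus(m_j-\mu_j)} \;=\; \exp(\tfrac{1}{2}{\bf i}\theta_j)\bigl(I_{\mu_j}\oplus(-I_{m_j-\mu_j})\bigr),
\]
and then use $W_j^T = W_j^*$ to obtain $\exp(\tfrac{1}{2}{\bf i}\theta_j)\, W_j(I_{\mu_j}\oplus(-I_{m_j-\mu_j}))W_j^*$, whose orbit as $W_j$ ranges over $U_{m_j}$ is precisely $\exp(\tfrac{1}{2}{\bf i}\theta_j)(U_{m_j}\cap Herm_{m_j}(\mu_j))$. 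The last block $V_\infty E_{\pi/2}^{\oplus k} V_\infty^T$ sweeps out $\mathcal{O}_{2k}\cap\mathfrak{so}_{2k}$ by the classical normal form for orthogonal skew-symmetric matrices (every such is real-orthogonally conjugate to $E_{\pi/2}^{\oplus k}$).

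Finally, for the manifold assertion, the displayed set is the orbit of $\widetilde{\mathcal{J}}$ under the congruence action of the closed subgroup $\mathcal{C}_{\mathcal{J}_M}\cap\mathcal{O}_n$ of the compact group $\mathcal{O}_n$. Since this group is compact, Remark \ref{Popov} guarantees an embedded submanifold, automatically compact, diffeomorphic to the quotient by the isotropy at $\widetilde{\mathcal{J}}$. That isotropy consists of $X\in\mathcal{C}_{\mathcal{J}_M}\cap\mathcal{O}_n$ with $X\widetilde{\mathcal{J}}X^T = \widetilde{\mathcal{J}}$, equivalently (using $X^T = X^{-1}$) $X\widetilde{\mathcal{J}} = \widetilde{\mathcal{J}}X$, which is exactly $\mathcal{C}_{\widetilde{\mathcal{J}}}\cap\mathcal{O}_n$. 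The most delicate point I anticipate is the middle-block computation: correctly extracting the complex scalar $\exp(\tfrac{1}{2}{\bf i}\theta_j)$ through the $\rho_{m_j}$ embedding and verifying that unitary $\ast$-conjugation of $I_{\mu_j}\oplus(-I_{m_j-\mu_j})$ exhausts all unitary Hermitian matrices of signature $(\mu_j, m_j-\mu_j)$; the first and third blocks reduce to standard real spectral normal forms.
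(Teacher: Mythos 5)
Your proposal is correct and follows essentially the same route as the paper: reduction to $\mathcal{J}_M$ via the isometry $\Gamma_{C_0}$, computation of $\mathcal{C}_{\mathcal{J}_M}\cap\mathcal{O}_n$ from Lemma \ref{lemma-su-J}(a) together with Remark-Definition \ref{rho}, the block-by-block identification of the congruence orbit (including the factorization of $\exp(\tfrac{1}{2}{\bf i}\theta_j)$ in the middle blocks), and the appeal to Remark \ref{Popov} with compactness of $\mathcal{C}_{\mathcal{J}_M}\cap\mathcal{O}_n$ and isotropy $\mathcal{C}_{\widetilde{\mathcal{J}}}\cap\mathcal{O}_n$. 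You even spell out the set-equality argument that the paper only references back to Propositions \ref{descr-SR(M)-indici} and \ref{descr-SSR(M)-indici}.
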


\begin{proof}
As in Propositions \ref{descr-SR(M)-indici} and \ref{descr-SSR(M)-indici}, we have: 

$
\mathcal{OSR}(M)_u^{(\mu_1, \cdots , \mu_r)}= \{ C_0 X \widetilde{\mathcal{J}}   X^T C_0^T : X \in  \mathcal{C}_{\mathcal{J}_M} \cap \mathcal{O}_n\} = \\ \Gamma_{C_0}(\{ X \widetilde{\mathcal{J}}   X^T  : X \in  \mathcal{C}_{\mathcal{J}_M} \cap \mathcal{O}_n\})$

and $\{ X \widetilde{\mathcal{J}}   X^T  : X \in  \mathcal{C}_{\mathcal{J}_M} \cap \mathcal{O}_n\} = \mathcal{OSR}(\mathcal{J}_M)_u^{(\mu_1, \cdots , \mu_r)}$.

Analogously to Proposition \ref{descr-SSR(M)-indici}, by Lemma \ref{lemma-su-J} (a), we get: 
$\mathcal{C}_{\mathcal{J}_M} \cap \mathcal{O}_n = \\ \big(GL_h \oplus GL_{m_1}(\mathbb{C}) \oplus \cdots \oplus  GL_{m_r}(\mathbb{C}) \oplus GL_{2k} \big) \cap \mathcal{O}_n =  
\mathcal{O}_h \oplus U_{m_1} \oplus \cdots \oplus U_{m_r} \oplus \mathcal{O}_{2k}
$.

Hence, if $X = V \oplus Z_1 \oplus \cdots \oplus Z_r \oplus W \in \mathcal{O}_h \oplus U_{m_1} \oplus \cdots \oplus U_{m_r} \oplus \mathcal{O}_{2k}= \mathcal{C}_{\mathcal{J}_M} \cap \mathcal{O}_n$, then, remembering Remark-Definition \ref{rho}, we obtain: 
$X \widetilde{\mathcal{J}} X^T =$ 

$[V (I_u \oplus (-I_{(h-u)}))V^T] \oplus [\bigoplus_{j=1}^r exp(\dfrac{1}{2}{\bf i}\theta_j) Z_j [I_{\mu_j} \oplus (- I_{(m_j-\mu_j)})] Z_j^*] \oplus [W (E_{\frac{\pi}{2}}^{\oplus k}) W^T]$.

In the last expression the first and the last summand are clearly generic elements of $\mathcal{O}_h \cap Sym_h(u)$ and of $\mathcal{O}_{2k} \cap \mathfrak{so}_{2k}$, respectively. Finally, for every $j$, the matrix 
$Z_j [I_{\mu_j} \oplus (- I_{(m_j-\mu_j)})] Z_j^*$ is a generic element of $U_{m_j} \cap Herm_{m_j}(\mu_j)$; this concludes the first part.

Again $\mathcal{OSR}(M)_u^{(\mu_1, \cdots , \mu_r)}$ and $\mathcal{OSR}({\mathcal{J}_M})_u^{(\mu_1, \cdots , \mu_r)}$ are diffeomorphic, since $\Gamma_{C_0}$ is a diffeomorphism of $\mathcal{O}_n$.

In the action by congruence of $\mathcal{C}_{\mathcal{J}_M} \cap \mathcal{O}_n$ on $\mathcal{O}_n$, $\mathcal{OSR}({\mathcal{J}_M})_u^{(\mu_1, \cdots , \mu_r)}$ is the orbit of $\widetilde{\mathcal{J}}$, while the isotropy subgroup at $\widetilde{\mathcal{J}}$ is $\mathcal{C}_{\widetilde{{\mathcal{J}}}} \cap \mathcal{O}_n \subseteq \mathcal{C}_{\mathcal{J}_M} \cap \mathcal{O}_n$; so $\mathcal{OSR}({\mathcal{J}_M})_u^{(\mu_1, \cdots , \mu_r)}$ and  $\mathcal{OSR}(M)_u^{(\mu_1, \cdots , \mu_r)}$ are  compact submanifolds of $\mathcal{O}_n$ diffeomorphic to $\dfrac{\mathcal{C}_{\mathcal{J}_M} \cap \mathcal{O}_n}{\mathcal{C}_{\widetilde{{\mathcal{J}}}} \cap \mathcal{O}_n}$ (again by Remark \ref{Popov}). This concludes the proof.
\end{proof}

\begin{thm}\label{manifold-O-con-indici}
Let $M \in S \mathcal{O}_n$ whose RJS form, $\mathcal{J}_M$, is as in Remarks-Definitions \ref{rem-RJS-RJA-ortog} $(*)$ and let $C_0 \in \mathcal{O}_n$ such that $M= C_0 \mathcal{J}_M C_0^T$. 
For every $0 \le u \le h$, $0 \le \mu_j \le m_j$, $1 \le j \le r$, we have:

\smallskip

a) $\mathcal{OSR}(M)_u^{(\mu_1, \cdots , \mu_r)}$ is a compact homogeneous differentiable manifold  diffeomorphic to the product 
$$\dfrac{\mathcal{O}_h}{\mathcal{O}_u \oplus \mathcal{O}_{(h-u)}} \times \prod_{j=1}^r \dfrac{U_{m_j}}{U_{\mu_j} \oplus U_{(m_j-\mu_j)}} \times \dfrac{\mathcal{O}_{2k}}{U_k}$$
whose dimension is $u(h-u) + 2 \sum_{j=1}^r \mu_j(m_j-\mu_j) +k(k-1)$.

If $k=0$ (i.e if $-1$ is not an eigenvalue of $M$), the manifold $\mathcal{OSR}(M)_u^{(\mu_1, \cdots , \mu_r)}$ is connected, otherwise it has two connected components and both are diffeomorphic to 
$\dfrac{\mathcal{O}_h}{\mathcal{O}_u \oplus \mathcal{O}_{(h-u)}} \times \prod_{j=1}^r \dfrac{U_{m_j}}{U_{\mu_j} \oplus U_{(m_j-\mu_j)}} \times \dfrac{S\mathcal{O}_{2k}}{U_k}$.

\smallskip

b) $\mathcal{OSR}(M)_u^{(\mu_1, \cdots , \mu_r)}$ is also a totally geodesic homogeneous Riemannian submanifold of $(\mathcal{O}_n, - g)$. 
\end{thm}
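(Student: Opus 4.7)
The plan is to derive part (a) by combining Proposition \ref{prop-ortog} with the explicit block descriptions of $\mathcal{C}_{\mathcal{J}_M}$ and $\mathcal{C}_{\widetilde{\mathcal{J}}}$ from Lemma \ref{lemma-su-J}, and to derive part (b) by the fixed-point-of-an-isometry argument already used in the proof of Theorem \ref{SR(M)-metrica}, adapted to the Riemannian metric $-g$ on $\mathcal{O}_n$.

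For part (a), I start from the diffeomorphism $\mathcal{OSR}(M)_u^{(\mu_1, \cdots , \mu_r)} \cong \dfrac{\mathcal{C}_{\mathcal{J}_M} \cap \mathcal{O}_n}{\mathcal{C}_{\widetilde{{\mathcal{J}}}} \cap \mathcal{O}_n}$ supplied by Proposition \ref{prop-ortog}. Using Lemma \ref{lemma-su-J}(a) together with the identification $GL_h(\mathbb{C}) \cap \mathcal{O}_{2h} = U_h$ from Remark-Definition \ref{rho}, I compute
\[
\mathcal{C}_{\mathcal{J}_M} \cap \mathcal{O}_n = \mathcal{O}_h \oplus U_{m_1} \oplus \cdots \oplus U_{m_r} \oplus \mathcal{O}_{2k},
\]
and by Lemma \ref{lemma-su-J}(b) with the analogous identification for the last block,
\[
\mathcal{C}_{\widetilde{\mathcal{J}}} \cap \mathcal{O}_n = (\mathcal{O}_u \oplus \mathcal{O}_{h-u}) \oplus \bigoplus_{j=1}^r (U_{\mu_j} \oplus U_{m_j-\mu_j}) \oplus U_k.
\]
Since the denominator sits block by block inside the numerator, the quotient factors as the product displayed in the statement. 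The dimension count then follows summand by summand: $u(h-u)$ for the real Grassmannian factor, $2\mu_j(m_j-\mu_j)$ for each complex Grassmannian factor, and $\dim \mathcal{O}_{2k} - \dim U_k = k(2k-1) - k^2 = k(k-1)$ for the last factor.

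For the connected-components analysis, the real and complex Grassmannian factors are each connected, so it suffices to study $\mathcal{O}_{2k}/U_k$. Since $\det \rho_k(U) = |\det U|^2 > 0$ (see Remark-Definition \ref{rho}), we have $U_k \subseteq S\mathcal{O}_{2k}$, and the splitting $\mathcal{O}_{2k} = S\mathcal{O}_{2k} \sqcup \mathcal{O}_{2k}^-$ shows that $\mathcal{O}_{2k}/U_k$ has exactly two connected components for $k \ge 1$, both diffeomorphic to $S\mathcal{O}_{2k}/U_k$ via left translation by any element of $\mathcal{O}_{2k}^-$; for $k=0$ this factor is a point and the full manifold is connected.

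For part (b), the map $L_M \circ j$ is an isometry of $(\mathcal{O}_n, -g)$ (by Remarks-Definitions \ref{def-isom}(a), since $M \in S\mathcal{O}_n \subseteq \mathcal{O}_n$) whose set of fixed points equals $\mathcal{OSR}(M)$ (by Remarks-Definitions \ref{def-isom}(b)). By the classical result that the fixed-point set of an isometry of a Riemannian manifold is a disjoint union of closed totally geodesic submanifolds (\cite[Ex.\,8.1 p.\,61, Prop.\,8.3 p.\,56]{KoNo2}), each connected component of $\mathcal{OSR}(M)$ is totally geodesic in $(\mathcal{O}_n, -g)$; as $\mathcal{OSR}(M)_u^{(\mu_1, \cdots , \mu_r)}$ is a union of such components, it inherits the same property. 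The homogeneous Riemannian structure then follows from Proposition \ref{prop-ortog}, because the compact group $\mathcal{C}_{\mathcal{J}_M} \cap \mathcal{O}_n$ acts transitively on it by congruence, and congruence by orthogonal matrices is an isometry of $(\mathcal{O}_n, -g)$ by Remarks-Definitions \ref{def-isom}(a). The main obstacle I anticipate is the bookkeeping in part (a): verifying carefully the intersections $GL_k(\mathbb{C}) \cap \mathcal{O}_{2k} = U_k$ and $GL_{m_j}(\mathbb{C}) \cap \mathcal{O}_{2m_j} = U_{m_j}$, so that the denominator correctly sits inside the numerator block by block; once this is settled, the product decomposition, the dimension formula and the component count fall out mechanically.
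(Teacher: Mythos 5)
Your proposal is correct and follows essentially the same route as the paper: Proposition \ref{prop-ortog} plus the block computation of $\mathcal{C}_{\mathcal{J}_M}\cap\mathcal{O}_n$ and $\mathcal{C}_{\widetilde{\mathcal{J}}}\cap\mathcal{O}_n$ for part (a), and the fixed-point-set-of-an-isometry argument for part (b). The only cosmetic differences are that you give a short direct argument (via $U_k\subseteq S\mathcal{O}_{2k}$) for the two components of $\mathcal{O}_{2k}/U_k$ where the paper cites its description of that homogeneous space in \cite{DoPe2018a}, and you invoke \cite{KoNo2} where the paper cites the equivalent Riemannian statement in \cite{Koba1995}.
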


\begin{proof}
In the proof of  Proposition \ref{prop-ortog} we have seen that

$\mathcal{C}_{\mathcal{J}_M} \cap \mathcal{O}_n = 
\mathcal{O}_h \oplus U_{m_1} \oplus \cdots \oplus U_{m_r} \oplus \mathcal{O}_{2k}
$.

By Lemma \ref{lemma-su-J} (b), we get also: 
$\mathcal{C}_{\widetilde{\mathcal{J}}} \cap \mathcal{O}_n=\\
 \{ \big(GL_u \oplus GL_{(h-u)} \big) \oplus \big( \bigoplus_{j=1}^r GL_{\mu_j}(\mathbb{C}) \oplus GL_{(m_j - \mu_j)}(\mathbb{C}) \big) \oplus GL_{k}(\mathbb{C}) \} \cap \mathcal{O}_n  =\\
\mathcal{O}_u \oplus \mathcal{O}_{(h-u)} \oplus U_{\mu_1} \oplus U_{(m_1 - \mu_1)} \oplus \cdots \oplus U_{\mu_r} \oplus U_{(m_r - \mu_r)} \oplus U_{k}
$.

Taking into account Proposition \ref{prop-ortog}, we obtain the first part of (a), because it is easy to see that the corresponding quotient is naturally diffeomorphic to the product in the statement; the dimension is a trivial computation on the product of the quotients. Since every factor is compact, the product is compact too. The factor $\dfrac{\mathcal{O}_h}{\mathcal{O}_u \oplus \mathcal{O}_{(h-u)}}$ is diffeomorphic to the real Grassmannian 
$\mathbb{G}_u(\mathbb{R}^h)$, every factor $\dfrac{U_{m_j}}{U_{\mu_j} \oplus U_{(m_j-\mu_j)}}$ is diffeomorphic to the complex Grassmannian 
$\mathbb{G}_{\mu_j}(\mathbb{C}^{m_j})$, while the last factor  $\dfrac{\mathcal{O}_{2k}}{U_k}$ is diffeomorphic to the manifold of skew-symmetric orthogonal matrices of order $2k$. This allows to obtain the statement about the connectedness (and hence to conclude (a)), for instance, by means of the description of $\dfrac{\mathcal{O}_{2k}}{U_k}$, given in \cite[Rem.\,3.1 and Prop.\,3.2]{DoPe2018a}).

\smallskip

As in Theorem \ref{SR(M)-metrica}, the map $L_M \circ j: (\mathcal{O}_n, - g) \to (\mathcal{O}_n, - g)$, $L_M \circ j(X)= M X^{-1}$ is an isometry of $(\mathcal{O}_n, - g)$, whose set of fixed points is $\mathcal{OSR}(M)$; hence we can conclude (b) by \cite[Thm.\,5.1, pp.\,59-60]{Koba1995}.
\end{proof}

\begin{rems}\label{OSR-casi}  

a) We have: $\mathcal{OSR}(-I_{2m}) = \mathcal{O}_{2m} \cap \mathfrak{so}_{2m} = S\mathcal{O}_{2m} \cap \mathfrak{so}_{2m}$ for every $m \ge 1$
 and $(S\mathcal{O}_{2m} \cap \mathfrak{so}_{2m}, - g)$ is a compact totally geodesic Riemannian submanifold of $(S\mathcal{O}_{2m}, - g)$ with two connected components identified by the sign of the \emph{pfaffian} of their matrices. Both components are simply connected and diffeomorphic to $\dfrac{S\mathcal{O}_{2m}}{U_m}$.

This follows from Proposition \ref{prop-ortog} and Theorem \ref{manifold-O-con-indici}, taking into account the analysis developed in \cite[\S\,3]{DoPe2018a}.

\smallskip

b) We have:  $\mathcal{OSR}({I_n})_u = \mathcal{O}_n \cap GLSym_n(u)$ for every $0 \le u \le n$ and
this is a compact totally geodesic Riemannian submanifold of $(\mathcal{O}_n, - g)$, diffeomorphic to the real Grassmannian $\mathbb{G}_{u}(\mathbb{R}^{n})$.

This follows again from Proposition \ref{prop-ortog} and Theorem \ref{manifold-O-con-indici} (see also 

\cite[\S\,4]{DoPe2018a}).
\end{rems}

\begin{rem}
Let $M \in S\mathcal{O}_n$ and assume the same notations as in Remarks-Definitions \ref{rem-RJS-RJA-ortog}. Then
$\mathcal{OSR}(M)$ is the disjoint union of the sets  $\mathcal{OSR}({M})_u^{(\mu_1, \cdots , \mu_r)}$ (on all admissible indices $u, \mu_1, \cdots , \mu_r$) and its connected components are homogeneous differentiable manifolds of various dimensions. The number of such components can be obtained from Theorem \ref{manifold-O-con-indici}.

In particular there are finitely many orthogonal square roots of $M$ if and only if each possible $\mathcal{OSR}({M})_u^{(\mu_1, \cdots , \mu_r)}$ has dimension $0$, \  i.\,e. if and only if 

$0 \le h, m_1, \cdots m_r, k \le 1$, \ i.\,e. if and only if $M$ has the eigenvalues different from $-1$, all of multiplicity $1$, and, if $-1$ is an eigenvalue, its multiplicity is $2$.
Hence, if there are finitely many orthogonal square roots of $M$, their number is precisely $2^{\lfloor (n+1)/2 \rfloor}$, where $\lfloor (n+1)/2 \rfloor$ denotes the integer part of $\dfrac{n+1}{2}$.
\end{rem}

\begin{rem}\label{OSR-intersecato}
Let $M \in S\mathcal{O}_n$ and assume the same notations as in Remarks-Definitions \ref{rem-RJS-RJA-ortog}.

If $1$ is not an eigenvalue of $M$ (i.e. if $h=0$), then $\mathcal{OSR}({M}) \subseteq S\mathcal{O}_n$.

Otherwise (i.e. if $h \ge 1$) $\mathcal{OSR}({M}) \cap S\mathcal{O}_n$ and $\mathcal{OSR}({M}) \cap \mathcal{O}_n^-$ are both not empty with

$\mathcal{OSR}({M}) \cap S\mathcal{O}_n = \bigsqcup_{\ 0 \le i \le \lfloor h/2 \rfloor; \ 0 \le \mu_j \le m_j ; \  1 \le j \le r \ } \mathcal{OSR}({M})_{h-2i}^{(\mu_1, \cdots , \mu_r)}$,

$\mathcal{OSR}({M}) \cap \mathcal{O}_n^- = \bigsqcup_{\ 0 \le i \le \lfloor (h-1)/2 \rfloor; \ 0 \le \mu_j \le m_j ; \  1 \le j \le r \ } \mathcal{OSR}({M})_{h-2i-1}^{(\mu_1, \cdots , \mu_r)}$.

Indeed, by Remarks-Definitions \ref{rem-RJS-RJA-ortog}, $\det(Y) = (-1)^{h-u}$, if $Y \in \mathcal{OSR}(M)_u^{(\mu_1, \cdots , \mu_r)}$.
\end{rem}

\begin{rem} Let $M \in S\mathcal{O}_n$ and, analogously to Definition \ref{prin-gen} and to Remarks \ref{princ}, we can define the set $\mathcal{POSR}(M)$ of \emph{generalized principal orthogonal square roots} of $M$ and, with the same notations as in Remarks-Definitions \ref{rem-RJS-RJA-ortog}, we get:
$\mathcal{POSR}(M) = \mathcal{OSR}(M)_h^{(m_1, \cdots , m_r)}$.

By Theorem \ref{manifold-O-con-indici}, $\mathcal{POSR}(M)$ is a single point if $-1$ is not an eingenvalue of $M$, while it is diffeomorphic to $\dfrac{\mathcal{O}_{2k}}{U_k}$ \  if $-1$ has multiplicity $2k \ge 2$ as eingenvalue of $M$.

In particular, if $k=1$, $\mathcal{POSR}(M)$ consists of two distinct points.

We refer, for instance, to \cite[\S\,3]{DoPe2018a} for more properties of the homogeneous space $\dfrac{\mathcal{O}_{2k}}{U_k}$.
\end{rem}

\end{document}